\documentclass[journal]{IEEEtran}
\usepackage{cite}
\usepackage{amsmath,amssymb,amsfonts}
\usepackage{algorithmic}
\usepackage{graphicx}
\usepackage{textcomp}
\usepackage{float}
\usepackage{graphicx}
\usepackage{subfigure}
\usepackage{epsfig}
\usepackage{xcolor}
\usepackage{amsthm}
\newtheorem{theorem}{Theorem}
\newtheorem{lemma}{Lemma}

\newtheorem{definition}{Definition}

\newtheorem{assumption}{Assumption}

\def\BibTeX{{\rm B\kern-.05em{\sc i\kern-.025em b}\kern-.08em
    T\kern-.1667em\lower.7ex\hbox{E}\kern-.125emX}}
\begin{document}
\title{Event-Triggered Discrete-Time \\ Multivariable Extremum Seeking Systems}

\author{
Victor Hugo Pereira Rodrigues$^{a}$,
Tiago Roux Oliveira$^{a}$,
Miroslav Krsti{\' c}$^{b}$,
Frank Allg{\" o}wer$^{c}$
\thanks{$^{a}$ Department of Electronics and Telecommunication Engineering,\\ State University of Rio de Janeiro (UERJ), Rio de Janeiro--RJ, Brazil \\ (\texttt{victor.rodrigues@uerj.br;  tiagoroux@uerj.br})}
\thanks{$^{b}$  Department of Mechanical and Aerospace  Engineering,\\ University of California at San Diego (UCSD), La Jolla--CA, USA \\ (\texttt{mkrstic@ucsd.edu})}
\thanks{$^{c}$  Institute for Systems Theory and Automatic Control, University of \\Stuttgart, Stuttgart, Germany (\texttt{allgower@ist.uni-stuttgart.de})}
}

\maketitle

\begin{abstract}
This paper introduces a discrete-time event-triggered extremum seeking framework for real-time optimization of multivariable nonlinear systems. In contrast to conventional discrete-time extremum seeking implementations that rely on periodic input updates, the proposed scheme updates the control action only when a state-dependent triggering condition is met, enabling aperiodic execution and substantial reduction of actuation and communication effort. The resulting closed-loop architecture reconciles two structurally different paradigms: the periodic excitation required for gradient recovery in extremum seeking and the aperiodic philosophy of event-triggered control. 
By combining discrete-time averaging arguments with Lyapunov-based analysis, we prove practical convergence of the system trajectories to a neighborhood of the unknown extremum and exponential stability of the corresponding average dynamics despite the loss of uniform sampling. The results show that appropriately designed triggering rules preserve the essential optimization mechanisms of classical extremum seeking while drastically reducing the number of input updates. Numerical simulations demonstrate that comparable optimization performance can be achieved with significantly fewer actuation events, highlighting the suitability of the method for resource-aware digital and networked control implementations. 
\end{abstract}

\begin{IEEEkeywords}
Adaptive Control; Extremum Seeking; Event-Triggered Control; Multiparameter Optimization; Discrete-Time Systems.
\end{IEEEkeywords}

\section{Introduction}
\label{sec:introduction}

\IEEEPARstart{I}{n} many modern control applications, the objective is no longer limited to stabilizing or regulating a system around a prescribed operating point, but rather to \textit{optimize performance in real time} in the presence of uncertainty, limited modeling knowledge, and resource constraints. This shift is particularly evident in networked and embedded control systems, where sensing, computation, and communication are no longer abundant, and where controllers must operate efficiently while adapting to unknown or time-varying performance landscapes. As a result, two major research directions have gained significant attention over the past decades: feedback optimization methods that enable online performance improvement without explicit models, and resource-aware control strategies that reduce the frequency of sensing and actuation without compromising stability. Among the most prominent representatives of these directions are \textit{extremum seeking} \cite{Aminde_2013,7035079, 7892977, 8827636, 10772652, 11219171} and \textit{event-triggered control} \cite{11316674, 11316266, 11270207, 11316250, 11250918}, whose theoretical developments have progressed largely in parallel within the control literature.

Extremum seeking (ES) \cite{KW:2000,TRoux:2022} is a feedback-based optimization methodology that drives a dynamical system toward the extremum of an unknown performance map without requiring an explicit model of the plant or objective function. Its theoretical foundations are rooted in averaging and singular perturbation analyses, which explain how periodic perturbations and demodulation mechanisms generate gradient-like information from measured outputs and enable provable convergence to neighborhoods of optimal operating points. While originally developed in continuous time, ES has been systematically extended to discrete-time implementations, multivariable settings, and scenarios involving time-varying or uncertain dynamics, where sampled-data analysis becomes essential to characterize stability and convergence properties. A defining feature of most ES schemes is their intrinsically \textit{time-triggered} nature: periodic dithers, synchronous demodulation, and fixed update rates are fundamental to the analytical machinery that justifies gradient recovery. This structural reliance on uniform sampling and periodic excitation makes ES particularly amenable to rigorous analysis, but also reveals limitations when one seeks to operate under resource constraints that preclude frequent or regular updates.

Conversely, event-triggered control (ETC) \cite{T:2007} departs from conventional periodic sampled-data implementations by enforcing control updates only when state- or output-dependent conditions are satisfied, leading to inherently \textit{aperiodic} execution. Developed within the frameworks of Lyapunov stability theory, hybrid systems, and sampled-data control, ETC provides systematic mechanisms to guarantee stability and performance while significantly reducing sensing, communication, and actuation effort. Triggering rules are typically designed to bound the discrepancy between continuously evolving signals and their last transmitted or updated values, ensuring that stability is preserved despite irregular update times. This paradigm has become central in the study of networked and resource-constrained control systems, where communication bandwidth, energy consumption, and computational load are critical concerns. However, ETC theory is traditionally developed for stabilization and regulation objectives, and its interaction with optimization-oriented adaptive schemes---particularly those relying on periodic excitation and synchronous updates---remains far less understood, especially in discrete-time multivariable contexts.

The structural contrast between the intrinsically time-triggered nature of extremum seeking and the inherently aperiodic philosophy of event-triggered control raises fundamental questions when both paradigms are combined within a single feedback loop. On the one hand, ES analysis relies critically on periodic excitation, synchronous demodulation, and fixed update rates to justify gradient recovery through averaging arguments; on the other hand, ETC intentionally disrupts periodicity by enforcing state-dependent, irregular updates to reduce resource usage. As a result, directly embedding ES schemes into an event-triggered architecture is far from straightforward, since the very mechanisms that guarantee convergence of ES may be altered by the loss of uniform sampling. This tension becomes even more pronounced in discrete-time and multivariable settings, where sampled-data effects, coupling among channels, and asynchronous updates interact in nontrivial ways.

Motivated by this gap, this paper develops a systematic event-triggered framework for discrete-time multivariable extremum seeking systems that preserves the optimization capabilities of ES while significantly reducing the number of control updates. Building upon discrete-time ES analysis and Lyapunov-based triggering design, we show how appropriate event conditions can be constructed so that gradient estimation and convergence properties are maintained despite aperiodic execution. The proposed results extend recent scalar formulations of our companion conference paper \cite{CDC26} to the multivariable case and provide rigorous guarantees that explicitly account for coupling effects and sampled-data dynamics, thereby establishing a theoretical bridge between two major paradigms of feedback optimization and resource-aware control.

It is worth noting that our previous works \cite{RODRIGUES2022555,RODRIGUES202310307,10886645,11312658,PEREIRARODRIGUES202513,VHPR:2023a} on event-triggered extremum seeking were developed in a continuous-time setting, where the triggering mechanism was designed in conjunction with the continuous evolution of the plant and adaptation dynamics. In contrast, the present work addresses the fundamentally different challenges that arise in discrete-time implementations, where sampling, input holding, and update constraints play a central role in the system behavior. Hence, the extension from continuous-time to discrete-time event-triggered extremum seeking is therefore not a straightforward discretization, but constitutes the main original contribution of this paper, requiring a distinct design and analysis framework tailored to discrete-time dynamics.

From a theoretical standpoint, the analysis combines discrete-time averaging \cite{BFS:1988,PPY:2004} with Lyapunov arguments to show practical convergence of the closed-loop trajectories to a neighborhood of the unknown extremum and exponential stability of the associated average dynamics. These results provide a rigorous justification that the introduction of the triggering mechanism does not compromise the fundamental ES behavior, but rather complements it with a resource-aware update policy.

Beyond the specific design proposed here, this work highlights a broader and still underexplored direction: event-triggered control in discrete time. While most of the event-triggered literature has been developed in continuous time \cite{EDK:2010}, discrete-time formulations are particularly natural for digital implementations, where sampling, computation, and communication constraints inherently shape how and when updates can occur \cite{frank:2023,frank:2023b}. In this sense, the proposed scheme is not only an ES contribution, but also a step toward bridging discrete-time control and event-triggered mechanisms in a practically meaningful way.

Simulation results illustrates that a substantial reduction in input updates can be achieved without sacrificing optimization performance, reinforcing the relevance of the approach for model-free resource-constrained real-time applications.

\section{Preliminaries}
\label{sec:preliminaries}

\textbf{Notation.} Throughout the manuscript, the 2-norm (Euclidean) of vectors and induced norm of matrices are denoted by double bars $\|\cdot\|$ while absolute value of scalar variables are denoted by single bars $|\cdot|$. The terms $\lambda_{\min}(\cdot)$ and $\lambda_{\max}(\cdot)$ denote the minimum and maximum eigenvalues of a given positive definite matrix, respectively. Consider $\varepsilon \in \lbrack -\varepsilon_{0}\,, \varepsilon_{0} \rbrack \subset \mathbb{R}$ and the mappings $\delta_{1}(\varepsilon)$ and $\delta_{2}(\varepsilon)$, where $\delta_{1}: \lbrack -\varepsilon_{0}\,, \varepsilon_{0} \rbrack \to \mathbb{R}$ and $\delta_{2}: \lbrack -\varepsilon_{0}\,, \varepsilon_{0} \rbrack \to \mathbb{R}$. The function $\delta_1(\varepsilon)$ has magnitude of order $\delta_2(\varepsilon)$, denoted by $\delta_{1}(\varepsilon) = \mathcal{O}(\delta_{2}(\varepsilon))$, if there exist positive constants $k$ and $c$ such that $|\delta_{1}(\varepsilon)| \leq k |\delta_{2}(\varepsilon)|$, for all $|\varepsilon|<c$.

\begin{definition}[Shift Operator, \cite{AW:1997}] \label{def:shiftOperator}

Let $f=\{f[k]\}_{k\in\mathbb{Z}}$ denote a discrete-time sequence, where $f[k] \in \mathbb{R}^{n}$ represents the value of the sequence at index $k \in \mathbb{Z}$. Assume that $f[k]$ is obtained by sampling a continuous-time signal $f_a(t) \in \mathbb{R}^{n}$ with sampling period $h>0$, {\it i.e.}, $f[k] = f_a(kh)$, $k\in\mathbb{Z}$. The {\bf forward shift operator} $q$ is defined by $qf[k] = f[k+1]$. Similarly, the {\bf backward shift operator} is $q^{-1}f[k] = f[k-1]$. In the $z$-domain, $q$ is identified with the complex variable $z \in \mathbb{C}$.
\end{definition}

\begin{figure*}[h!]
\centering
\includegraphics[width=17cm]{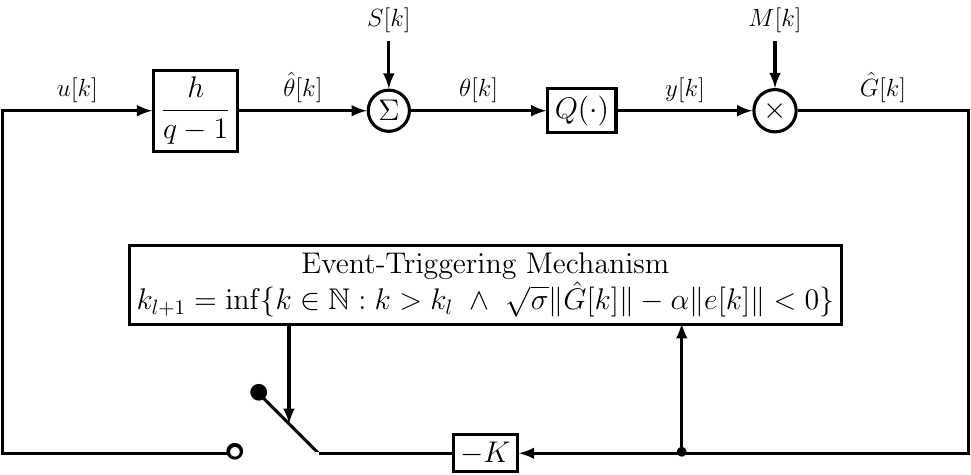}
\caption{Event-Triggered Discrete-Time Gradient-based Multivariable  Extremum Seeking.}
\label{fig:BD_GradientES_v2}
\end{figure*}

\section{Problem Formulation}
\label{sec:MainResults}

We consider the following  nonlinear static map
\begin{align}
y[k]&=Q(\theta[k])= Q^{\ast}+\frac{1}{2}(\theta[k]-\theta^{\ast})^{\top}H^{\ast}(\theta[k]-\theta^{\ast}) \label{eq:y_v2} \\
&=Q^{\ast}+\frac{1}{2}\sum_{l=1}^{N}\sum_{m=1}^{N}H_{lm}^{\ast}(\theta_{l}[k]-\theta^{\ast}_{l})(\theta_{m}[k]-\theta^{\ast}_{m})\,, \label{eq:y_v3}
\end{align}
where $Q^{\ast}\in \mathbb{R}$ is the extremum point, $H^{\ast}=H^{\ast \top} \in \mathbb{R}^{n \times n}$ is the Hessian matrix, $\theta^{\ast} \in \mathbb{R}^{n}$ is the optimizer, and $\theta[k]\in \mathbb{R}^{n}$ is the input map. The cost function $Q(\theta[k])$ is the map to be optimized and its parameters $H^{\ast}$ and $\theta^{\ast}$ in (\ref{eq:y_v2}) are not explicitly known; however, we have access to measurements of $y[k] \in \mathbb{R}$ and can adjust $\theta[k]$. 

If the Hessian matrix is positive definite ({\it i.e.}, $H^{\ast} > 0$), the map (\ref{eq:y_v2}) is convex and attains a minimum at $\theta = \theta^{\ast}$. Conversely, if $H^{\ast} < 0$, the function becomes concave and the extremum at $\theta^{\ast}$ corresponds to a maximum. Therefore, by evaluating the sign of $H^{\ast}$, one can directly infer whether the extremum seeking algorithm should be interpreted as solving a minimization or maximization problem.

Although (\ref{eq:y_v2}) is a polynomial in $\theta$, it cannot be identified from a finite number of input/output pairs $(\theta, y) $ due to the real-time nature of extremum seeking optimization. Furthermore, extremum seeking is a versatile approach that can handle any unknown analytic function $Q(\theta)$, which can be expressed as a Taylor series expansion around a point $\theta^{\ast}$, where the function attains a minimum or maximum. This assumption enables a local quadratic approximation of the nonlinear map $Q(\theta)$, serving as a foundational principle and justifying our focus on quadratic maps. Therefore, while the analysis was conducted for a quadratic map, our approach is not limited to quadratic functions $Q(\theta)$.

\subsection{Classical Discrete-Time Gradient-based Extremum Seeking}

The multivariable gradient-based Extremum Seeking (GradientES) approach for this multivariable static map is illustrated in Fig.~\ref{fig:BD_GradientES_v2}. In this scheme, the feedback or adaptation gain is
\begin{align}
K=\text{diag}\left\{K_{{1}}\,,K_{{2}}\,,\ldots\,,K_{{n}}\right\}\,, \label{eq:K}
\end{align}
while the dither signals and demodulations are defined as (see \cite{GKN:2012}):
\begin{align}
S[k]&= \left[a_{1}\sin\left[\omega_1h k\right],\ldots,a_{i}\sin\left[\omega_ih k\right],\ldots,a_{n}\sin\left[\omega_nh k\right]\right]^{\top}\!\!, \label{eq:S_v1} \\
M[k]&=2\left[\frac{\sin\left[\omega_1h k\right]}{a_{1}},\ldots,\frac{\sin\left[\omega_ih k\right]}{a_{i}},\ldots,\frac{\sin\left[\omega_nh k\right]}{a_{n}}\right]^{\top}, \label{eq:M_v1}
\end{align}
with nonzero amplitudes $a_{i}$. Moreover, the probing frequencies $\omega_{i}$'s can be selected as
\begin{align}
\omega_{i}=\omega_{i}'\omega \,, \quad i \in \left\{1,\ldots\,,n\right\}\,, \label{eq:omegai_event}
\end{align}
where $\omega$ is a positive constant and $\omega_{i}'$ is a rational number. In \cite{GKN:2012}, the following assumption is originally introduced.
\begin{assumption}\label{assumption_w}
The probing frequencies satisfy
\begin{align}
\omega'_{i} 	\notin \left\{\omega'_{j}\,,~\frac{1}{2}(\omega'_{j}+\omega'_{k})\,,~\omega'_{j}+2\omega'_{k}\,,~\omega'_{k}\pm\omega'_{l}\right\}\,, \label{eq:omega_iNotIn}
\end{align}
for all $i$, $j$, $k$ and $l$.
\end{assumption} 

Under this assumption, the probing frequencies are selected so that specific combinations of frequencies do not generate DC components that could bias the gradient estimate and lead to an incorrect direction during the demodulation and averaging process, as studied in \cite{GKN:2012}.

The scheme in Fig.~\ref{fig:BD_GradientES_v2} highlights the core components of the ES architecture: the gradient estimation mechanism driven by the periodic perturbation-demodulation technique and the adaptation block that adjusts the input vector using the estimated gradient in order to estimate $\theta^{\ast}$.

Moreover, if Assumption~\ref{assumption_w} holds, an accurate estimate of the $i$-th component of the unknown gradient vector can be obtained by means of $\hat{G}[k] = M[k]y[k] \in \mathbb{R}^{n}$, whose $i$-th component can be expressed as
\begin{align}
\hat{G}_{i}[k]&=\frac{2}{a_{i}}\sin\left[\omega_ih k\right]y[k]\,. \label{eq:Gi}
\end{align}
As mentioned above, the output of the integrator, the vector $\hat{\theta}[k]\in \mathbb{R}^{n}$, give us an estimate of $\theta^{\ast} \in \mathbb{R}^{n}$ such that the \textit{estimation error} is defined by: 
\begin{align}
\tilde{\theta}[k]&:=\hat{\theta}[k]-\theta^{\ast}\,.\label{eq:tildeThetai_v1}
\end{align}
Still referring to Fig.~\ref{fig:BD_GradientES_v2}, the input to the nonlinear map $Q(\theta)$ in (\ref{eq:y_v2}) is given by $\theta[k] = \hat{\theta}[k] + S[k]$. For analysis purposes, and using (\ref{eq:S_v1}) and (\ref{eq:tildeThetai_v1}), the $i$-th component of the input vector $\theta[k]$ can be expressed in terms of the $i$-th component of the unknown estimation error vector $\tilde{\theta}[k]$, and the $i$-th component of the unknown optimal parameter vector $\theta^{\ast}$, as well as the $i$-th component of the known dither signal $S[k]$, as follows
\begin{align}
\theta_{i}[k]&=\tilde{\theta}_{i}[k]+a_{i}\sin\left[\omega_ih k\right]+\theta_{i}^{\ast}\,.\label{eq:thetai_v1}
\end{align}
Therefore, by plugging (\ref{eq:y_v3}),  (\ref{eq:M_v1}) and (\ref{eq:thetai_v1}) into (\ref{eq:Gi}), the $i$-th component of the estimate of the unknown gradient vector can be expressed as
\begin{align}
\hat{G}_{i}[k]&=\frac{1}{a_{i}}\sin[\omega_{i}hk]\tilde{\theta}^{\top}[k]H^{\ast}\tilde{\theta}[k]\nonumber \\
&\quad+\sum_{j=1}^{n}H_{ij}^{\ast}[k]\tilde{\theta}_{j}[k] +\delta_{i}[k] \,, \label{eq:hatG_20250522_1} \\
H_{ij}^{\ast}[k]:&=H_{ij}^{\ast}+\Delta H_{ij}^{\ast}[k]\,, \label{eq:calligraH_ij_20250522_1} \\
\Delta H_{ij}^{\ast}[k]&:=-H_{ij}^{\ast}\cos[2\omega_{i}hk]\nonumber \\
&\quad+\sum_{\substack{l=1 \\ l\neq i}}^{n}H_{lj}^{\ast}\frac{a_{l}}{a_{i}}\cos[(\omega_{i}-\omega_{l})hk] \nonumber \\
&\quad-\sum_{\substack{l=1 \\ l\neq i}}^{n}H_{lj}^{\ast}\frac{a_{l}}{a_{i}}\cos[(\omega_{i}+\omega_{l})hk]\,, \label{eq:DeltaCalligraH_ij_20250522_1} \\
\delta_{i}[k]&:=\frac{2Q^{\ast}}{a_{i}}\sin[\omega_{i}hk] \nonumber \\
&\quad+\frac{1}{4}\sum_{j=1}^{n}\sum_{l=1}^{n}H_{jl}^{\ast}\frac{a_{j}a_{l}}{a_{i}}\sin[(\omega_{i}+\omega_{j}-\omega_{l})hk] \nonumber \\
&\quad-\frac{1}{4}\sum_{j=1}^{n}\sum_{l=1}^{n}H_{jl}^{\ast}\frac{a_{j}a_{l}}{a_{i}}\sin[(\omega_{i}-\omega_{j}+\omega_{l})hk] \nonumber \\
&\quad-\frac{1}{4}\sum_{j=1}^{n}\sum_{l=1}^{n}H_{jl}^{\ast}\frac{a_{j}a_{l}}{a_{i}}\sin[(\omega_{i}+\omega_{j}+\omega_{l})hk] \nonumber \\
&\quad-\frac{1}{4}\sum_{j=1}^{n}\sum_{l=1}^{n}H_{jl}^{\ast}\frac{a_{j}a_{l}}{a_{i}}\sin[(\omega_{i}-\omega_{j}-\omega_{l})hk]\,. \label{eq:Delta_i_20250222_1}
\end{align}

Hence, by using (\ref{eq:calligraH_ij_20250522_1})--(\ref{eq:Delta_i_20250222_1}), we can define the time-varying matrices $H^{\ast}[k] \in \mathbb{R}^{n \times n}$, $\Delta H^{\ast}[k] \in \mathbb{R}^{n \times n}$, and the time-varying vector $\delta[k] \in \mathbb{R}^{n }$, as follows
\begin{align}
H^{\ast}[k] &:= H^{\ast}+\Delta H^{\ast}[k]\,, \label{eq:calligraH}\\
\Delta H^{\ast}[k]&:= \begin{bmatrix}
													\Delta H^{\ast}_{11}[k] & \Delta H^{\ast}_{12}[k] & \ldots & \Delta H^{\ast}_{1n}[k] \\
													\Delta H^{\ast}_{21}[k] & \Delta H^{\ast}_{22}[k] & \ldots & \Delta H^{\ast}_{2n}[k] \\
													\vdots                         & \vdots                         & \ddots & \vdots                         \\
													\Delta H^{\ast}_{n1}[k] & \Delta H^{\ast}_{n2}[k] & \ldots & \Delta H^{\ast}_{nn}[k] \\
												 \end{bmatrix} \,, \label{eq:DeltaCalligraH} \\
						 \delta[k] &:= \begin{bmatrix}
													\delta_{1}[k] \,, 
													\delta_{2}[k] \,,
													\ldots\,,
													\delta_{n}[k]
												 \end{bmatrix}^{\top} \,. \label{eq:Delta}
\end{align}
Then, by using (\ref{eq:M_v1}) and (\ref{eq:calligraH})--(\ref{eq:Delta}), we can express (\ref{eq:hatG_20250522_1}) in the compact form
\begin{align}
\hat{G}[k]&\mathbb{=}\frac{1}{2}M[k]\tilde{\theta}^{\top}[k]H^{\ast}\tilde{\theta}[k]+\left(H^{\ast}+\Delta H^{\ast}[k]\right)\tilde{\theta}[k]+\delta[k], \label{eq:hatG_20240302_2}
\end{align}
where $\Delta H^{\ast}[k]$, defined in (\ref{eq:DeltaCalligraH}), and $\delta[k]$, given in (\ref{eq:Delta}), are time-varying matrix and vector, respectively, both with zero mean.

On the other hand, from the Event-Triggered Discrete-Time Gradient-based Multivariable Extremum Seeking scheme depicted in Fig.~\ref{fig:BD_GradientES_v2}, by using the shift operator given in Definition~\ref{def:shiftOperator}, the discrete estimate of $\theta^{\ast}$ can be found as 
\begin{align}
    \hat{\theta}[k]=\frac{h}{q-1}u[k]\,. \label{eq:thetaHat_20260313}
\end{align}
Then, by plugging (\ref{eq:thetaHat_20260313}) in (\ref{eq:tildeThetai_v1}), the dynamics that governs $\tilde{\theta}[k]$, is given by
\begin{align}
\tilde{\theta}[k]&=\frac{h}{q-1}u[k]-\theta^{\ast} \label{eq:dtildeThetadt_20250206_1}\,, 
\end{align}
or, equivalently, 
\begin{align}
\tilde{\theta}[k+1]&=\tilde{\theta}[k]+hu[k] \label{eq:tildeThetaK+1_20260313_1}\,, 
\end{align}
with $u[k]= [u_{1}[k]\,,u_{2}[k]\,,\ldots\,,u_{n}[k]]^{\top} \in \mathbb{R}^{n}$ once $\theta^{\ast}[k]=\theta^{\ast}$ such that $(q-1)\theta^{\ast}[k]=\theta^{\ast}[k+1]-\theta^{\ast}[k]=\theta^{\ast}-\theta^{\ast}=0$. Moreover, by using (\ref{eq:hatG_20240302_2}), the gradient estimate in the iteration $k+1$ is given by
\begin{align}
\hat{G}[k+1]&=\frac{1}{2}M[k+1]\tilde{\theta}^{\top}[k+1]H^{\ast}\tilde{\theta}[k+1] \nonumber \\
&\quad+\left(H^{\ast}+\Delta H^{\ast}[k+1]\right)\tilde{\theta}[k+1]+\delta[k+1] \,. \label{eq:hatG_20260313_2} 
\end{align}
Now, plugging (\ref{eq:tildeThetaK+1_20260313_1}) in (\ref{eq:hatG_20260313_2}), and \textcolor{blue}{adding} and \textcolor{red}{subtracting} the terms $\frac{1}{2}M[k]\tilde{\theta}^{\top}[k]H^{\ast}\tilde{\theta}[k]$, $\frac{h^{2}}{2}M[k]u^{\top}[k]H^{\ast}u[k]$, $\left(H^{\ast}+\Delta H^{\ast}[k]\right)\tilde{\theta}[k]$, $hM[k]\tilde{\theta}^{\top}[k]H^{\ast}u[k]$ and $\delta[k]$, equation (\ref{eq:hatG_20260313_2}) can be rewritten as 
\begin{align}
\hat{G}[k\mathbb{+}1]&=\textcolor{blue}{\frac{1}{2}M[k]\tilde{\theta}^{\top}[k]H^{\ast}\tilde{\theta}[k]+\left(H^{\ast}+\Delta H^{\ast}[k]\right)\tilde{\theta}[k]+\delta[k]} \nonumber \\
&\quad +\frac{1}{2}(M[k+1]\textcolor{red}{-M[k]})\tilde{\theta}^{\top}[k]H^{\ast}\tilde{\theta}[k]\nonumber \\
&\quad +\frac{h^{2}}{2}(\textcolor{blue}{M[k]}+M[k+1]\textcolor{red}{-M[k]})u^{\top}[k]H^{\ast}u[k]\nonumber \\
&\quad +h(\textcolor{blue}{M[k]}+M[k+1]\textcolor{red}{-M[k]})\tilde{\theta}^{\top}[k]H^{\ast}u[k]\nonumber \\
&\quad +\left(H^{\ast}+\Delta H^{\ast}[k+1]\textcolor{red}{-H^{\ast}-\Delta H^{\ast}[k]}\right)\tilde{\theta}[k] \nonumber \\
&\quad+h\left(H^{\ast}\textcolor{blue}{+\Delta H^{\ast}[k]}+\Delta H^{\ast}[k+1]\textcolor{red}{-\Delta H^{\ast}[k]}\right)u[k] \nonumber \\
&\quad+\delta[k+1]\textcolor{red}{-\delta[k]} \,. \label{eq:hatG_20260313_3} 
\end{align}
Then, defining $\Delta \tilde{H}^{\ast}[k]=\Delta H^{\ast}[k+1]-\Delta H^{\ast}[k]$, $\tilde{M}[k]=M[k+1]-M[k]$  and $\tilde{\delta}[k]=\delta[k+1]-\delta[k]$, and using (\ref{eq:hatG_20240302_2}), equation (\ref{eq:hatG_20260313_3}) becomes  
\begin{align}
\hat{G}[k+1]&=\hat{G}[k] +\frac{1}{2}\tilde{M}[k]\tilde{\theta}^{\top}[k]H^{\ast}\tilde{\theta}[k] \nonumber \\
&\quad +\frac{h^{2}}{2}(M[k]+\tilde{M}[k])u^{\top}[k]H^{\ast}u[k]\nonumber \\
&\quad +h(M[k]+\tilde{M}[k])\tilde{\theta}^{\top}[k]H^{\ast}u[k]+\tilde{M}[k]\tilde{\theta}[k]\nonumber \\
&\quad+h\left(H^{\ast}+\Delta H^{\ast}[k]+\Delta \tilde{H}^{\ast}[k]\right)u[k]+\tilde{\delta}[k]\,. \label{eq:hatG_20260313_4} 
\end{align}
From (\ref{eq:M_v1}), (\ref{eq:DeltaCalligraH_ij_20250522_1}) and (\ref{eq:Delta_i_20250222_1}), it is easy to verify that $M[k]$, $\Delta H^{\ast}[k]$ and $\delta[k]$ as well as $\tilde{M}[k]$, $\Delta \tilde{H}^{\ast}[k]$ and $\tilde{\delta}[k]$ have null mean values as well.

\subsection{Discrete-Time Gradient-Based Tuning Law}

We consider the following static gradient-feedback tuning law, given by 
\begin{align}
u[k]=-K\hat{G}[k] \,, \quad \forall k \in \mathbb{Z} \label{eq:u}
\end{align}
with the matrix gain $K$ being designed such that $I_{n}-hH^{\ast}K$ is Schur,  with $I_{n} \in \mathbb{R}^{n \times n}$ being an identity matrix. 

We assume that the control law (\ref{eq:u}) stabilizes the system at the corresponding average equilibrium $\hat{G}_{\rm{av}} \equiv 0$ with local exponential convergence. This means that any trajectory of $\hat{\theta}[k]$ starting sufficiently near to the extremum point $\theta^{\ast}$ will converge to a neighborhood of it exponentially, with uniform decay and overshoot bounds. It simply formalizes the idea that the control law is designed for local stabilization, regardless of the specific value of $\theta$. The control law (\ref{eq:u}) does not rely on detailed knowledge of the mapping (\ref{eq:y_v3}) or the system dynamics (\ref{eq:tildeThetaK+1_20260313_1}) and (\ref{eq:hatG_20260313_2}). Its design is model-independent in that sense, which is particularly advantageous in scenarios where precise models are unavailable or hard to obtain.

\subsection{Event-Triggered Discrete-Time Multivariable Extremum Seeking} \label{sec:ET-GradientES}

In our approach, the control updates are executed at a sequence of iteration step $k \in \mathbb{Z}$, which are not predetermined at uniform intervals, but instead generated by an event-triggering mechanism. This mechanism is designed to ensure the closed-loop system retains both stability and robustness. 

The event-triggering mechanism for discrete-time multivariable Gradient-based extremum seeking operates by monitoring the evolution of the gradient estimate $\hat{G}[k]$, and determining whether a control update is required based on a prescribed deviation criterion. In other words, the control input $u[k]$ is only updated at discrete iteration $\{k_{l}\}_{l\in \mathbb{N}}$, which are generated by a condition that compares the current $\hat{G}[k]$ to its most recently sampled value $\hat{G}[k_{l}]$. If the difference between these two quantities exceeds a predefined threshold, an event is triggered and the control law is re-evaluated. 

Assuming no delays in the sensor-to-controller and controller-to-actuator communication paths, the control input is updated only at the discrete triggering iteration step $\{k_{l}\}_{l\in \mathbb{N}}$ and held constant over the inter-execution interval. In particular, the control input computed at the last triggering instant $k_l$ is applied for all $k \in [k_l, k_{l+1})$. Thus, the tuning law is given by
\begin{align}
u[k] = -K\hat{G}[k_l]\,, \quad \forall k \in [k_l, k_{l+1})\,, \quad l \in \mathbb{N}. 
\label{eq:u_discrete_event}
\end{align}
Since the control updates occur only at the triggering instants, the state-dependent quantities evolve between updates while the control input remains constant. This motivates the definition of the measurement error over the inter-execution interval as
\begin{align}
e[k] := \hat{G}[k_l] - \hat{G}[k]\,, \quad \forall k \in [k_l, k_{l+1})\,, \quad l \in \mathbb{N}. 
\label{eq:e_discrete_event}
\end{align}

Therefore, by using (\ref{eq:hatG_20240302_2}) and (\ref{eq:e_discrete_event}), $\forall k \in [k_l, k_{l+1})$, $l \in \mathbb{N}$, the event-triggered discrete-time multivariable Gradient-based tuning law (\ref{eq:u_discrete_event}) can be rewritten as
\begin{align}
u[k] &= -K\hat{G}[k]-Ke[k]  \,, \label{eq:u_20250602_v2} \\
&=-\frac{1}{2}KM[k]\tilde{\theta}^{\top}[k]H^{\ast}\tilde{\theta}[k]-K\left(H^{\ast}+\Delta H^{\ast}[k]\right)\tilde{\theta}[k] \nonumber \\
&\quad-K\delta[k]-Ke[k] \,. \label{eq:u_20250602_v3}
\end{align}
Now, plugging (\ref{eq:u_20250602_v2}) into (\ref{eq:hatG_20260313_4}) and (\ref{eq:u_20250602_v3}) into (\ref{eq:tildeThetaK+1_20260313_1}), we arrive at the following Input-to-State Stable (ISS) \cite{K:2002} representations for the dynamics of $\hat{G}[k]$ and $\tilde{\theta}[k]$ with respect to the error vector $e[k]$ in (\ref{eq:e_discrete_event}), $\forall k \in [k_l, k_{l+1})$, $l \in \mathbb{N}$: 
\begin{align}
\hat{G}[k\mathbb{+}1]&=\hat{G}[k] +\frac{1}{2}\tilde{M}[k]\tilde{\theta}^{\top}[k]H^{\ast}\tilde{\theta}[k] \nonumber \\
&\quad +\frac{h^{2}}{2}(M[k]+\tilde{M}[k])\hat{G}^{\top}[k]K^{\top}H^{\ast}K\hat{G}[k]\nonumber \\
&\quad +\frac{h^{2}}{2}(M[k]+\tilde{M}[k])e^{\top}[k]K^{\top}H^{\ast}Ke[k]\nonumber \\
&\quad +h^{2}(M[k]+\tilde{M}[k])\hat{G}^{\top}[k]K^{\top}H^{\ast}Ke[k]\nonumber \\
&\quad -h(M[k]+\tilde{M}[k])\tilde{\theta}^{\top}[k]H^{\ast}K\hat{G}[k] \nonumber \\
&\quad -h(M[k]+\tilde{M}[k])\tilde{\theta}^{\top}[k]H^{\ast}Ke[k] +\tilde{M}[k]\tilde{\theta}[k]\nonumber \\
&\quad-h\left(H^{\ast}+\Delta H^{\ast}[k]+\Delta \tilde{H}^{\ast}[k]\right)K\hat{G}[k]\nonumber \\
&\quad-h\left(H^{\ast}+\Delta H^{\ast}[k]+\Delta \tilde{H}^{\ast}[k]\right)Ke[k]+\tilde{\delta}[k]\,, \label{eq:hatG_20260318_1} \\
\tilde{\theta}[k\mathbb{+}1]&=\tilde{\theta}[k]-\frac{h}{2}KM[k]\tilde{\theta}^{\top}[k]H^{\ast}\tilde{\theta}[k] \nonumber \\
&\quad-hK\left(H^{\ast}+\Delta H^{\ast}[k]\right)\tilde{\theta}[k]-hK\delta[k]-hKe[k]\,. \label{eq:tildeThetaK+1_20260318_1}
\end{align}

In the following sections, we present a static event-triggered mechanism for the nonautonomous and nonlinear time-varying closed-loop system described in discrete-time by (\ref{eq:hatG_20260318_1}) and (\ref{eq:tildeThetaK+1_20260318_1}). In this framework, data transmission occurs only when the norm of the measurement error vector (\ref{eq:e_discrete_event}) exceeds a predefined threshold. This triggering mechanism operates without continuous communication, thereby reducing the communication burden while preserving system stability.

\subsection{Event-Triggering Mechanism} 

Definition~\ref{def:staticEvent_discrete} illustrates how the small design parameter $\sigma \in (0,1)$ and the error signal $e[k]$—representing the deviation between the gradient estimate $\hat{G}[k]$ and its last transmitted value—are leveraged to construct a static event-triggered mechanism for multivariable Gradient-based extremum seeking in discrete-time. The resulting strategy triggers the recalculation of the tuning law, as defined in (\ref{eq:u_discrete_event}) only when needed. The closed-loop system preserves asymptotic stability and guaranteed convergence, as supported by the theoretical results in \cite{EDK:2010}.

\begin{definition}[\small{Event-Triggering Condition}] \label{def:staticEvent_discrete}
The event-triggered discrete-time multivariable Gradient-based extremum seeking controller with static event-triggered condition consists of two components:
\begin{enumerate}
	\item A sequence of increasing triggering iteration step $\mathcal{K}$ such that $\mathcal{K}=\{k_{0}, k_{1}, k_{2}, \ldots\}$ with $k_{0}=0$, generated according to:\\
		$\bullet$ If $\left\{k \!\in\! \mathbb{Z}:~ k \!>\! k_{l} ~ \wedge ~ \sqrt{\sigma}\|\hat{G}[k]\| \!-\! \alpha \|e[k]\| \!<\! 0 \right\} = \emptyset$,
		then the set of triggering instants is $\mathcal{K}=\{k_{0}, k_{1}, \ldots, k_{l}\}$. \\
		
		$\bullet$ If $\left\{k \!\in\! \mathbb{Z}:~ k \!>\! k_{l} ~ \wedge ~ \sqrt{\sigma}\|\hat{G}[k]\| \!-\! \alpha \|e[k]\| \!<\! 0 \right\} \neq \emptyset$, 
		the next triggering instant is given by
		\begin{align}
		k_{l+1} = \inf\left\{k \in \mathbb{Z}:~ k \mathbb{>} k_{l} ~ \mathbb{\wedge} ~ \sqrt{\sigma}\|\hat{G}[k]\| \mathbb{-} \alpha \|e[k]\| \mathbb{<} 0 \right\}.
		\label{eq:k_next_event}
		\end{align}	
	\item A feedback control action (\ref{eq:u_discrete_event}) updated at the triggering instants $k_l$ and held constant for all $k \in [k_l, k_{l+1})$.
\end{enumerate}
\end{definition}

\subsection{Average Closed-Loop System}

First, to represent the event-triggered update directly in terms of the discrete-time variable $j$, we define the piecewise-constant signal
\begin{align}
    \zeta[k]:=\hat{G}[k_{l(k)}],\quad l(k)=\max\{i:\,k_i\le k\}, \label{eq:zeta_1}
\end{align}
where $l(k)$ denotes the index of the most recent triggering instant. By construction, \(\zeta[k]\) is constant, for all $k\in[k_l,k_{l+1})$, {\it i.e.}, 
\begin{align}
    \zeta[k]=\hat{G}[k_l]=\zeta_l. \label{eq:zeta_2}
\end{align}
Consequently, the triggering error (\ref{eq:e_discrete_event}) can be written as
\begin{align}
   e[k]=\zeta[k]-\hat{G}[k], \label{eq:e_discrete_event_2}
\end{align}
and, defining the augmented state as follows
\begin{align}
x[k]=\begin{bmatrix} x_{1}^{\top}[k]\,, x_{2}^{\top}[k]\end{bmatrix}^{\top}:=\begin{bmatrix} \hat{G}^{\top}[k]\,, \tilde{\theta}^{\top}[k]\end{bmatrix}^{\top}\,, \label{eq:X}
\end{align}
the closed-loop system (\ref{eq:hatG_20260318_1})--(\ref{eq:tildeThetaK+1_20260318_1}) assume the form
\begin{align}
x[k+1]&=x[k]+h f(k,x,h)\,, \label{eq:x_k+1_event}
\end{align}
where
\begin{align}
f_{1}(k,x,h)&=\frac{1}{2h}\tilde{M}[k]x^{\top}_{2}[k]H^{\ast}x_{2}[k] \nonumber \\
&\quad +\frac{h}{2}(M[k]+\tilde{M}[k])x^{\top}_{1}[k]K^{\top}H^{\ast}Kx_{1}[k]\nonumber \\
&\quad +\frac{h}{2}(M[k]+\tilde{M}[k])e^{\top}[k]K^{\top}H^{\ast}Ke[k]\nonumber \\
&\quad +h(M[k]+\tilde{M}[k])x^{\top}_{1}[k]K^{\top}H^{\ast}Ke[k]\nonumber \\
&\quad -(M[k]+\tilde{M}[k])x^{\top}_{2}[k]H^{\ast}Kx_{1}[k] \nonumber \\
&\quad -(M[k]+\tilde{M}[k])x^{\top}_{2}[k]H^{\ast}Ke[k] +\frac{1}{h}\tilde{M}[k]x_{2}[k]\nonumber \\
&\quad-\left(H^{\ast}+\Delta H^{\ast}[k]+\Delta \tilde{H}^{\ast}[k]\right)Kx_{1}[k]\nonumber \\
&\quad-\left(H^{\ast}+\Delta H^{\ast}[k]+\Delta \tilde{H}^{\ast}[k]\right)Ke[k]+\frac{1}{h}\tilde{\delta}[k]\,, \label{eq:f1_20260321_1} \\
f_{2}(k,x,h)&=-\frac{1}{2}KM[k]\tilde{\theta}^{\top}[k]H^{\ast}\tilde{\theta}[k] \nonumber \\
&\quad-K\left(H^{\ast}+\Delta H^{\ast}[k]\right)\tilde{\theta}[k]-K\delta[k]-Ke[k]\,. \label{eq:f2_20260321_1}
\end{align}
In (\ref{eq:x_k+1_event})--(\ref{eq:f2_20260321_1}), the dependence on the event-triggering mechanism enters through the piecewise-constant signal $\zeta[k]$ defined in (\ref{eq:zeta_1})--(\ref{eq:zeta_2}). Since $\zeta[k]$ remains constant over each inter-event interval, the field $f(k,x,h)$ coincides on $[k_l,\,k_{l+1})$ with the frozen field $f(k,x,h;\zeta_l)$, obtained by holding $\zeta[k]\equiv\zeta_l$. The frozen field is $T$-periodic in $k$, with
\begin{equation}
T = 2\pi \times \mathrm{LCM}\left\{\frac{1}{\omega_i}\right\},
\quad \forall i \in \{1,2,\ldots,n\},
\label{eq:T}
\end{equation}
where $\mathrm{LCM}$ denotes the least common multiple. Hence, it is possible to define
\begin{equation}
\omega := \frac{2\pi}{T}.
\label{eq:omega_event_1}
\end{equation}
However, $f(k,x,h)$ is \emph{not} globally $T$-periodic in $k$ once the triggering mechanism is active: the switching instants $\{k_l\}$ are state-dependent and do not lie on the dither grid. Consequently, the discrete-time averaging results of \cite{BFS:1988,PPY:2004} do not apply to $f(k,x,h)$ directly over the infinite horizon, but rather on each frozen interval, with the resulting $\mathcal{O}(h)$ estimate shown to propagate uniformly across resets (see Appendix B for the complete argument).

The averaging approach allows one to characterize in what sense the behavior of a constructed average autonomous system approximates that of the original non-autonomous system (\ref{eq:x_k+1_event}). Intuitively, when the system evolves on a slower time scale than the excitation, its behavior is predominantly governed by the average effect of the excitation over one period.

By applying the discrete-time averaging technique to (\ref{eq:x_k+1_event}), we obtain the average system
\begin{align}
x^{\rm{av}}[k+1] &= x^{\rm{av}}[k] + h f^{\rm{av}}(x^{\rm{av}}), \label{eq:xav_k+1_event} \\
f^{\rm{av}}(x^{\rm{av}}) &= \lim_{h \to 0}\lim_{T \to \infty} \frac{1}{T} \sum_{k=s+1}^{s+T} f(k,x^{\rm{av}},h). \label{eq:fav_event}
\end{align}

Therefore, ``freezing'' the average states variables of $x_{1}[k]=\hat{G}[k]$ and $x_{2}[k]=\tilde{\theta}[k]$ in (\ref{eq:x_k+1_event})--(\ref{eq:f2_20260321_1}), (\ref{eq:hatG_20260318_1}), (\ref{eq:tildeThetaK+1_20260318_1}) and (\ref{eq:e_discrete_event}), one gets,  for all $k \in [k_l, k_{l+1})$, the corresponding average system
\begin{align}
\hat{G}_{\rm{av}}[k+1]&=(I_{n} -h H^{\ast}K)\hat{G}_{\rm{av}}[k]-h H^{\ast}Ke_{\rm{av}}[k]\,, \label{eq:hatG_av_k+1_1} \\
\tilde{\theta}_{\rm{av}}[k+1]&=(I_{n}-h KH^{\ast})\tilde{\theta}_{\rm{av}}[k]-h Ke_{\rm{av}}[k]\,, \label{eq:tildeTheta_av_k+1_1} \\
\hat{G}_{\rm{av}}[k]&=H^{\ast}\tilde{\theta}_{\rm{av}}[k]\,, \label{eq:hatG_av_k_1} \\
e_{\rm{av}}[k] &= \hat{G}_{\rm{av}}[k_l] - \hat{G}_{\rm{av}}[k]\,, \quad \forall k \in [k_l, k_{l+1})\,, \quad l \in \mathbb{N}. 
\label{eq:e_av_k_1}
\end{align}

Since $H^{\ast}$ is symmetric and definite, it is invertible. Therefore, $H^{\ast-1}(I_{n}-h H^{\ast}K)H^{\ast} = I_{n}-h KH^{\ast}$. Hence, $ I_{n}-h KH^{\ast}$ is similar to $ I_{n}-h H^{\ast} K$, and so they have exactly the same eigenvalues. Because a matrix is Schur if and only if all its eigenvalues lie strictly inside the open unit disk, it follows that $ I_{n}-h H^{\ast} K$ is Schur $\Longrightarrow$ $ I_{n}-h KH^{\ast}$ is also Schur. Therefore, the following assumption is additionally considered throughout the paper.

\begin{assumption} \label{assumption_lyapunovEq}
 The matrices $(I_{n}-h H^{\ast}K)$ and $(I_{n}-h KH^{\ast})$ are Schur such that, for any given $Q=Q^{\top}>0$, there exist symmetric positive definite matrices $P=P^{\top}>0$ and $P'=P'^{\top}>0$ satisfying the following Lyapunov equations for discrete-time systems
\begin{align}
(I_{n}-h KH^{\ast})^{\top}P(I_{n}-h KH^{\ast})-P&=-Q \,, \label{eq:LyapEq_1} \\
(I_{n}-h H^{\ast}K)^{\top}P'(I_{n}-h KH^{\ast})-P'&=-Q \,. \label{eq:LyapEq_2} 
\end{align}   
Furthermore, there exists a known positive constant $\alpha$ satisfying 
\begin{small}
   \begin{align}
    \alpha > \frac{h \sqrt{ 4\|(I_{n} \mathbb{-}h H^{\ast}K)^{\top} \!\! P H^{\ast}K\|^{2}\mathbb{+}2\lambda_{\min}\{Q\}\|K^{\top} H^{\ast \top} \!\! P H^{\ast}K\|}}{\lambda_{\min}\{Q\}} \,. \label{eq:alpha}
   \end{align} 
\end{small}

\end{assumption}

Now, we introduce {\bf Definition~\ref{def:staticEvent_discrete_av}} as an average version of {\bf Definition~\ref{def:staticEvent_discrete}}.

\begin{definition}[\small{Average Event-Triggering Condition}] \label{def:staticEvent_discrete_av}
The event-triggered discrete-time multivariable Gradient-based extremum seeking controller with static event-triggered condition and the small design parameter $\sigma \in (0,1)$ consists of two components:
\begin{enumerate}
	\item A sequence of increasing triggering iteration step $\mathcal{K}$ such that $\mathcal{K}=\{k_{0}, k_{1}, k_{2}, \ldots\}$ with $k_{0}=0$, generated according to: \\
		$\bullet$ If  $\left\{k \!\in\! \mathbb{Z}:k \!>\! k_{l} \mathbb{\wedge} \sqrt{\sigma}\|\hat{G}_{\rm{av}}[k]\| \!-\! \alpha \|e_{\rm{av}}[k]\| \!<\! 0 \right\} = \emptyset$,
		then the set of triggering instants is $\mathcal{K}=\{k_{0}, k_{1}, \ldots, k_{l}\}$.\\
		
		$\bullet$ If $\left\{k \!\in\! \mathbb{Z}: k \!>\! k_{l} \mathbb{\wedge}  \sqrt{\sigma}\|\hat{G}_{\rm{av}}[k]\| \!-\! \alpha \|e_{\rm{av}}[k]\| \!<\! 0 \right\} \neq \emptyset$,
		the next triggering instant is given by
		\begin{align}
		k_{l\mathbb{+}1} \mathbb{=} \inf\left\{k \mathbb{\in} \mathbb{Z}: k \!>\! k_{l}  \mathbb{\wedge}  \sqrt{\sigma}\|\hat{G}_{\rm{av}}[k]\| \mathbb{-} \alpha \|e_{\rm{av}}[k]\| \mathbb{<} 0 \right\}\!\!\,.
		\label{eq:k_next_event_av}
		\end{align}
	
	\item A feedback control action (\ref{eq:u_discrete_event}) updated at the triggering instants $k_l$ and held constant for all $k \in [k_l, k_{l+1})$,
    		\begin{align}
			u_{\rm{av}}[k]=-K\hat{G}_{\rm{av}}[k_{l}] \,. \label{eq:U_MD2}
		\end{align}
\end{enumerate}
\end{definition}

\section{Stability Analysis}\label{sec:ET-DT-gradient-ES_stability}

Theorem~\ref{thm:NETESC_2} states the local exponential practical stability of the extremum seeking control of Fig.~\ref{fig:BD_GradientES_v2} based on event-triggering execution mechanism is ensured.

\begin{theorem} \label{thm:NETESC_2}
Consider the closed-loop average dynamics of the gradient estimate (\ref{eq:hatG_av_k+1_1}), the average error vector (\ref{eq:e_av_k_1}), under  Assumptions \ref{assumption_w} and \ref{assumption_lyapunovEq}, and the average \textbf{static} event-triggering mechanism given by \textbf{Definition \ref{def:staticEvent_discrete_av}}. For $h>0$ sufficiently small, the equilibrium $(\hat{G}_{\rm{av}}\,,\tilde{\theta}_{\rm{av}})=(0\,,0)$ is locally exponentially stable such that the following inequalities can be obtained for the non-average signals:
\begin{small}
\begin{align}
&\|\theta[k]\mathbb{-}\theta^{\ast}\|  \mathbb{\leq}  \sqrt{\frac{\lambda_{\max}\{P\}}{\lambda_{\min}\{P\}}}\|H^{\ast}\|\|H^{\ast\mathbb{-}1}\|\left(\!\!1\mathbb{-} \frac{\lambda_{\min}\{Q\}}{\lambda_{\max}\{P\}}\frac{\left(1 \mathbb{-} \sigma\right)}{2}\!\!\right)^{\!\!\frac{k}{2}} \nonumber \\
 &\times\|\theta[0]-\theta^{\ast}\|+\mathcal{O}\left(a+h\right),  \label{eq:normTheta_thm1} \\
    &|y[k] - Q^{\ast}| \leq 2\frac{\lambda_{\max}\!\left\{(-1)^{\beta}H^{\ast}\right\}}{\lambda_{\min}\!\left\{(-1)^{\beta}H^{\ast}\right\}}\frac{\lambda_{\max}\{P\}}{\lambda_{\min}\{P\}} \|H^{\ast}\|^{2}\|H^{\ast\mathbb{-}1}\|^{2} \nonumber \\
 &\times\left(1-\frac{\lambda_{\min}\{Q\}}{\lambda_{\max}\{P\}}\frac{\left(1-\sigma\right)}{2}\right)^{k}|y[0]-Q^{\ast}|+\mathcal{O}\left(a^{2}+h^2\right)\,, \label{eq:normY_thm1} 
\end{align}
\end{small}
 $\!\!$where $\beta = 0$ if $H^{\ast} > 0$ and $\beta = 1$ if $H^{\ast} < 0$, such that $(-1)^{\beta} H^{\ast} > 0$ in both cases, and $a=\sqrt{\sum_{i=1}^{n}a_{i}^{2}}$, with $a_i$ in (\ref{eq:S_v1}) and (\ref{eq:M_v1}). \textcolor{black}{In addition, the inter-event time satisfies $k_{l+1}-k_{l}\geq k^{\ast}\geq 2$ for all $l$ such that consecutive-iteration (persistent) triggering is excluded.
 }
\end{theorem}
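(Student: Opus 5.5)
The plan has four steps. First I show exponential stability of the average system under the triggering rule of Definition~\ref{def:staticEvent_discrete_av}. Then I transfer that bound to the actual signals with discrete-time averaging. Next I convert it into the bounds on $\theta[k]$ and $y[k]$. Finally I establish the minimum inter-event time.

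\textbf{Lyapunov decrease for the average system.} I will use $V[k]=\tilde{\theta}_{\rm av}^{\top}[k]P\tilde{\theta}_{\rm av}[k]$ with $P$ from (\ref{eq:LyapEq_1}). Writing $A=I_n-hKH^{\ast}$ and using (\ref{eq:tildeTheta_av_k+1_1}), the one-step change is
\begin{align*}
V[k+1]-V[k]=-\tilde{\theta}_{\rm av}^{\top}Q\tilde{\theta}_{\rm av}-2h\tilde{\theta}_{\rm av}^{\top}A^{\top}PKe_{\rm av}+h^{2}e_{\rm av}^{\top}K^{\top}PKe_{\rm av}.
\end{align*}
I will rewrite this in terms of $\hat G_{\rm av}=H^{\ast}\tilde\theta_{\rm av}$ via (\ref{eq:hatG_av_k_1}). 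Equivalently, I can work directly with $\hat G_{\rm av}$ and the similarity $H^{\ast-1}(I_n-hH^{\ast}K)H^{\ast}=A$, which is where the matrices $(I_n-hH^{\ast}K)^{\top}PH^{\ast}K$ and $K^{\top}H^{\ast\top}PH^{\ast}K$ appearing in (\ref{eq:alpha}) come from.

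The cross term is handled by Young's inequality with weight $\lambda_{\min}\{Q\}/2$. This gives
\begin{align*}
\Delta V\le-\tfrac{\lambda_{\min}\{Q\}}{2}\|\hat G_{\rm av}\|^{2}+h^{2}\Big(\tfrac{2\|(I_n-hH^{\ast}K)^{\top}PH^{\ast}K\|^{2}}{\lambda_{\min}\{Q\}}+\|K^{\top}H^{\ast\top}PH^{\ast}K\|\Big)\|e_{\rm av}\|^{2}
\end{align*}
(up to the change of variables). Between events the triggering rule guarantees $\alpha\|e_{\rm av}\|\le\sqrt{\sigma}\|\hat G_{\rm av}\|$. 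Condition (\ref{eq:alpha}) is precisely that the bracketed $h^2$ coefficient is below $\alpha^{2}\lambda_{\min}\{Q\}/2$. Hence
\[
\Delta V\le-\tfrac{\lambda_{\min}\{Q\}}{2}(1-\sigma)\|\cdot\|^{2},
\]
and therefore
\[
V[k]\le\Big(1-\tfrac{\lambda_{\min}\{Q\}}{\lambda_{\max}\{P\}}\tfrac{1-\sigma}{2}\Big)^{k}V[0].
\]
Sandwiching $V$ between $\lambda_{\min}\{P\}$ and $\lambda_{\max}\{P\}$, and passing between $\hat G_{\rm av}$ and $\tilde\theta_{\rm av}$ with $\|H^{\ast}\|\|H^{\ast-1}\|$, produces the exponential estimate with the stated constant and rate exponent $k/2$. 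At a triggering instant $e_{\rm av}=0$, so the inequality also holds there.

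\textbf{Minimum inter-event time.} Just after $k_l$ we have $e_{\rm av}[k_l]=0$. Over one step, (\ref{eq:hatG_av_k+1_1}) gives
\[
e_{\rm av}[k_l+1]=hH^{\ast}K\hat G_{\rm av}[k_l],
\]
while $\|\hat G_{\rm av}[k_l+1]\|\ge(1-h\|H^{\ast}K\|)\|\hat G_{\rm av}[k_l]\|$. So the trigger cannot fire at $k_l+1$ provided
\[
\alpha h\|H^{\ast}K\|<\sqrt{\sigma}(1-h\|H^{\ast}K\|),
\]
which holds for $h$ small. The same growth estimate, iterated, gives $k^{\ast}\ge2$. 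Since $\alpha$ is itself $\mathcal{O}(h)$ by (\ref{eq:alpha}), the ratio $\alpha h/\sqrt{\sigma}$ is $\mathcal O(h^2)$, and in fact $k^\ast$ grows as $h\to0$.

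\textbf{Averaging transfer.} This is the main obstacle, because $f(k,x,h)$ is only $T$-periodic on each frozen interval. I will apply the discrete-time averaging theorem of \cite{BFS:1988,PPY:2004} on each interval $[k_l,k_{l+1})$ with $\zeta_l$ frozen. This yields $\|x[k]-x^{\rm av}[k]\|=\mathcal O(h)$ there, and the estimate must hold uniformly across resets; this is the argument deferred to Appendix~B. The error does not accumulate because the average dynamics are exponentially contracting, a standard ISS/total-stability argument. The key inputs are the uniform rate from the first step and the dwell time from the second step, so the number of resets per unit of averaging time is bounded. I will try to bound the mismatch at each reset by $\mathcal O(h)$ times the contraction factor and sum the geometric series.

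\textbf{Output bounds.} Finally, $\theta[k]-\theta^{\ast}=\tilde\theta[k]+S[k]$ with $\|S[k]\|\le a$. This gives (\ref{eq:normTheta_thm1}), with $\mathcal O(a+h)$ collecting the dither and the averaging error, using $\theta[0]-\theta^\ast$ in place of $\tilde\theta_{\rm av}[0]$ up to those terms. For the output I use $y-Q^{\ast}=\tfrac12(\theta-\theta^{\ast})^{\top}H^{\ast}(\theta-\theta^{\ast})$. The upper and lower bounds $\lambda_{\max/\min}\{(-1)^{\beta}H^{\ast}\}/2$ convert $|y[0]-Q^\ast|$ into $\|\theta[0]-\theta^\ast\|^2$ and back. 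Squaring (\ref{eq:normTheta_thm1}) with $(x+y)^2\le2x^2+2y^2$ then gives (\ref{eq:normY_thm1}), including the factor $2$ and the $\mathcal O(a^2+h^2)$ residual.
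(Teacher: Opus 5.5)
Your outline follows the paper's proof closely. Both use the same Peter--Paul/(\ref{eq:alpha})/trigger-bound Lyapunov estimate, then Rayleigh--Ritz plus $\|H^\ast\|\|H^{\ast-1}\|$ to reach $\tilde\theta_{\rm av}$. Both then use frozen-interval averaging with reset propagation as in Appendix~B, $\|S[k]\|\le a$, and $(x+y)^2\le 2x^2+2y^2$ for $y$ (the paper's Peter--Paul with $\varepsilon=1/2$). Your check at $k_l+1$ is the paper's step $m=1$, done on the average trigger alone. It has the merit that the sign condition $\sqrt\sigma(1-h\|H^\ast K\|)>\alpha h\|H^\ast K\|$ does not depend on $\|\hat G_{\rm av}[k_l]\|$, whereas the paper's margin $\sqrt\sigma\|\hat G_{\rm av}[k_l]\|\pm\mathcal{O}(h)$ degenerates as $\hat G_{\rm av}[k_l]\to0$. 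Two steps, however, fail as written.

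\textbf{Step~1 (change of variables).} Your identity for $V=\tilde\theta_{\rm av}^\top P\tilde\theta_{\rm av}$ is correct, but it is not equivalent to the $\hat G_{\rm av}$ inequality you then state. In $\hat G_{\rm av}$-coordinates this $V$ is $\hat G_{\rm av}^\top H^{\ast-1}PH^{\ast-1}\hat G_{\rm av}$, and its $Q$-term is $-\hat G_{\rm av}^\top H^{\ast-1}QH^{\ast-1}\hat G_{\rm av}$. Its cross matrices are $H^{\ast-1}(I_n-hKH^\ast)^\top PK$ and $K^\top PK$. The similarity $H^{\ast-1}(I_n-hH^\ast K)H^\ast=I_n-hKH^\ast$ transports Schur stability, but it maps $(P,Q)$ to $(H^{\ast-1}PH^{\ast-1},H^{\ast-1}QH^{\ast-1})$. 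So you get neither the matrices of (\ref{eq:alpha}) nor the rate $\lambda_{\min}\{Q\}/\lambda_{\max}\{P\}$. To reach the stated constants, do what the paper does: take $V=\hat G_{\rm av}^\top P\hat G_{\rm av}$ and expand along (\ref{eq:hatG_av_k+1_1}). This gives exactly $(I_n-hH^\ast K)^\top PH^\ast K$ and $K^\top H^{\ast\top}PH^\ast K$, provided $P$ solves the Lyapunov equation of $I_n-hH^\ast K$. The paper cites (\ref{eq:LyapEq_1}) for this; the two equations coincide, e.g., when $K$ is a multiple of $I_n$.

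\textbf{Step~3 (reset propagation).} A dwell time $k^\ast\ge 2$ does not bound the number of resets per unit of averaging time: $1/h$ steps may contain about $1/(2h)$ resets, and here that matters. Set $B=I_n-hH^\ast K$ and $\rho=1-\frac{\lambda_{\min}\{Q\}}{\lambda_{\max}\{P\}}\frac{1-\sigma}{2}$. From $P=\sum_{j\ge0}(B^\top)^jQB^j$ and $\|Bw\|\ge(1-h\|H^\ast K\|)\|w\|$ one gets $\lambda_{\max}\{P\}\ge\lambda_{\min}\{Q\}/(2h\|H^\ast K\|)$, hence $1-\rho\le(1-\sigma)h\|H^\ast K\|$. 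Summing $\mathcal{O}(h)$ reset mismatches against $\rho^{(k_{l+1}-k_l)/2}$ over two-step intervals therefore gives only a bound of order $h/(1-\rho)$, which is not small.

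Your fallback ``$\alpha=\mathcal{O}(h)$ by (\ref{eq:alpha})'' is false. (\ref{eq:alpha}) bounds $\alpha$ from below, and by the same estimate its right-hand side is at least $(1-h\|H^\ast K\|)/(\|H^\ast K\|\,\|(H^\ast K)^{-1}\|)$.

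What the summation actually needs is inter-event times of order $1/h$. For the average trigger this does follow by iterating your own estimate with $\hat G_{\rm av}[k]=(I_n-mhH^\ast K)\hat G_{\rm av}[k_l]$ and $e_{\rm av}[k]=mhH^\ast K\hat G_{\rm av}[k_l]$, $m=k-k_l$: no event occurs while $mh\|H^\ast K\|(\sqrt\sigma+\alpha)\le\sqrt\sigma$. But the frozen intervals in your averaging step are those of the true rule (\ref{eq:k_next_event}), about which Step~2 says nothing. This bound therefore still has to be transferred to the true rule, without using the averaging closeness it is meant to support. The paper's Lemma~\ref{lem:C} does not count resets; it propagates the reset error by an induction using only $\rho^{(k_{l+1}-k_l)/2}<1$. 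If you keep your summation route instead, you need the per-interval contraction bounded away from $1$ uniformly in $h$.
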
 

\begin{proof} \textcolor{black}{The proof strategy follows the standard methodology used throughout the extremum-seeking literature, both in continuous and discrete time. The analysis is first carried out for the average dynamics, where exponential stability of the equilibrium is established through a Lyapunov argument. Subsequently, the conclusions are transferred to the original event-triggered system by invoking discrete-time averaging results, which guarantee $\mathcal{O}(h)$-closeness between the trajectories of the original and average systems for sufficiently small sampling periods.} 

\textcolor{black}{This is precisely the approach adopted in classical ES analyses and does not constitute a relaxation of the stability claim. In particular, the practical exponential convergence estimates (\ref{eq:normTheta_thm1})--(\ref{eq:normY_thm1}) are ultimately stated for the original system, while the average system serves as an analytical vehicle to derive these properties.}

The proof of the theorem is divided into two parts: practical asymptotic stability and \textcolor{black}{exclusion of persistent triggering.}

\begin{flushleft}
\underline{\it A. Practical Asymptotic Stability}
\end{flushleft}

Now, consider the following Lyapunov function candidate for the average system (\ref{eq:hatG_av_k+1_1}),
\begin{align}
    V[\hat{G}_{\rm{av}}[k]]=\hat{G}_{\rm{av}}^{\top}[k]P\hat{G}_{\rm{av}}[k]\,, \quad P=P^{\top}>0\,. \label{eq:lyapfunc}
\end{align}
Along the trajectories of the system, by using (\ref{eq:LyapEq_1}),
\begin{align}
    &\Delta V[\hat{G}_{\rm{av}}[k]]= V[\hat{G}_{\rm{av}}[k+1]]-V[\hat{G}_{\rm{av}}[k]] \label{eq:lyapfunc_0} \\
    &\mathbb{=}\hat{G}_{\rm{av}}^{\top}[k+1]P\hat{G}_{\rm{av}}[k+1]-\hat{G}_{\rm{av}}^{\top}[k]P\hat{G}_{\rm{av}}[k] \nonumber \\
    &\mathbb{=}\mathbb{-}\hat{G}^{\top}_{\rm{av}}[k]Q\hat{G}_{\rm{av}}[k] \mathbb{-}2h\hat{G}^{\top}_{\rm{av}}[k](I_{n} \mathbb{-}h H^{\ast}K)^{\top}P H^{\ast}Ke_{\rm{av}}[k] \nonumber \\
    &\quad+h^{2}e_{\rm{av}}^{\top}[k] K^{\top} H^{\ast \top} P H^{\ast}Ke_{\rm{av}}[k]\,. \label{eq:lyapfunc_1}
\end{align}
Then, once \mbox{$\hat{G}^{\top}_{\rm{av}}[k](I_{n} \mathbb{-}h H^{\ast}K)^{\top}P H^{\ast}Ke_{\rm{av}}[k]\mathbb{\leq} \|\hat{G}^{\top}_{\rm{av}}[k]\| \mathbb{\times}$} \mbox{$\|(I_{n} \mathbb{-}h H^{\ast}K)^{\top}P H^{\ast}K\|\|e_{\rm{av}}[k]\|$, $e_{\rm{av}}^{\top}[k] K^{\top} H^{\ast \top} P H^{\ast} K$} \mbox{$ \mathbb{\times}e_{\rm{av}}[k] \leq\|K^{\top} H^{\ast \top} P H^{\ast}K\| \|e_{\rm{av}}[k]\|^2$}, and, remembering that $Q=Q^{\top}>0$, from Assumption~\ref{assumption_lyapunovEq}, such that $\lambda_{\min}\{Q\}\|\hat{G}_{\rm{av}}[k]\|^2\leq\hat{G}^{\top}_{\rm{av}}[k]Q\hat{G}_{\rm{av}}[k]$, an upper bound for (\ref{eq:lyapfunc_1}) can be found as
\begin{align}
    \Delta V[\hat{G}_{\rm{av}}[k]] &\leq - \lambda_{\min}\{Q\}\|\hat{G}_{\rm{av}}[k]\|^2 \nonumber \\
    &+2h\|\hat{G}^{\top}_{\rm{av}}[k]\| \|(I_{n} \mathbb{-}h H^{\ast}K)^{\top}P H^{\ast}K\|\|e_{\rm{av}}[k]\| \nonumber \\
    &+h^{2}\|K^{\top} H^{\ast \top} P H^{\ast}K\| \|e_{\rm{av}}[k]\|^2\,. \label{eq:lyapfunc_2}
\end{align}
Now, applying the Peter-Paul inequality \cite{W:1971}, $cd\leq \frac{\varepsilon c^2}{2} +\frac{d^2}{2\varepsilon}$ for all $c,d,\varepsilon>0$, with $c=\|\hat{G}^{\top}_{\rm{av}}[k]\|$, $d=\|(I_{n} \mathbb{-}h H^{\ast}K)^{\top}P H^{\ast}K\|\|e_{\rm{av}}[k]\|$ and $\varepsilon=\frac{\lambda_{\min}\{Q\}}{2h}$, the inequality (\ref{eq:lyapfunc_2}) is upper bounded by
 \begin{align}
    &\Delta V[\hat{G}_{\rm{av}}[k]] \leq - \lambda_{\min}\{Q\}\|\hat{G}_{\rm{av}}[k]\|^2+\frac{\lambda_{\min}\{Q\}}{2}\|\hat{G}_{\rm{av}}[k]\|^2 \nonumber \\
    &+\frac{2h^{2} \|(I_{n} \mathbb{-}h H^{\ast}K)^{\top}P H^{\ast}K\|^{2}}{\lambda_{\min}\{Q\}}\|e_{\rm{av}}[k]\|^{2} \nonumber \\
    &+h^{2}\|K^{\top} H^{\ast \top} P H^{\ast}K\| \|e_{\rm{av}}[k]\|^2 \nonumber \\
    &=- \frac{\lambda_{\min}\{Q\}}{2}\|\hat{G}_{\rm{av}}[k]\|^2 \nonumber \\
    &\mathbb{+}\frac{h^{2}( 2\|(I_{n} \mathbb{-}h H^{\ast}K)^{\top} \!\! P H^{\ast}K\|^{2}\mathbb{+}\lambda_{\min}\{Q\}\|K^{\top} H^{\ast \top} \!\! P H^{\ast}K\|)}{\lambda_{\min}\{Q\}} \nonumber \\
    &\times \|e_{\rm{av}}[k]\|^{2} \nonumber \\
    &=- \frac{\lambda_{\min}\{Q\}}{2}\left(\|\hat{G}_{\rm{av}}[k]\|^2 \right. \nonumber \\
    &\mathbb{-}\frac{h^{2}( 4\|(I_{n} \mathbb{-}h H^{\ast}K)^{\top} \!\! P H^{\ast}K\|^{2}\mathbb{+}2\lambda_{\min}\{Q\}\|K^{\top} H^{\ast \top} \!\! P H^{\ast}K\|)}{\lambda_{\min}^{2}\{Q\}} \nonumber \\
    &\left.\times \|e_{\rm{av}}[k]\|^{2}\right) \,. \label{eq:lyapfunc_3}
\end{align}   
By using (\ref{eq:alpha}), inequality (\ref{eq:lyapfunc_3}) is upper bounded by
 \begin{align}
    \Delta V[\hat{G}_{\rm{av}}[k]] &\leq - \frac{\lambda_{\min}\{Q\}}{2}\left(\|\hat{G}_{\rm{av}}[k]\|^2 - \alpha^{2} \|e_{\rm{av}}[k]\|^{2}\right) \nonumber \\
    &=- \frac{\lambda_{\min}\{Q\}}{2}\left(\|\hat{G}_{\rm{av}}[k]\| + \alpha \|e_{\rm{av}}[k]\|\right) \nonumber \\
    &\quad\times \left(\|\hat{G}_{\rm{av}}[k]\| - \alpha \|e_{\rm{av}}[k]\|\right)\,. \label{eq:lyapfunc_4}
\end{align}
In the proposed event-triggering mechanism, the update law is (\ref{eq:U_MD2}) such that the vector $u_{\rm{av}}[k]$ is held constant between two consecutive events, and therefore
\begin{align}
\alpha\|e_{\rm{av}}[k]\|& \leq \sqrt{\sigma}\|\hat{G}_{\rm{av}}[k]\|\,, \quad \sigma \in (0,1)\,. \label{eq:eAv_upperBound}
\end{align}
Now, by using (\ref{eq:eAv_upperBound}), inequality (\ref{eq:lyapfunc_4}) is upper bounded as 
 \begin{align}
    \Delta V[\hat{G}_{\rm{av}}[k]] &\leq - \frac{\lambda_{\min}\{Q\}}{2}\left(\|\hat{G}_{\rm{av}}[k]\| + \sqrt{\sigma}\|\hat{G}_{\rm{av}}[k]\|\right) \nonumber \\
    &\quad\times \left(\|\hat{G}_{\rm{av}}[k]\| - \sqrt{\sigma}\|\hat{G}_{\rm{av}}[k]\|\right) \nonumber \\
    &=- \frac{\lambda_{\min}\{Q\}}{2}\left(1 - \sigma\right)\|\hat{G}_{\rm{av}}[k]\|^2\,. \label{eq:lyapfunc_5}
\end{align}
By using the Rayleigh-Ritz Inequality \cite{K:2002}, one gets 
\begin{align}
\lambda_{\min}\{P\}\|\hat{G}_{\rm{av}}[k]\|^{2}\leq V[\hat{G}_{\rm{av}}[k]] \leq \lambda_{\max}\{P\}\|\hat{G}_{\rm{av}}[k]\|^{2}\,, \label{eq:Rayleigh-Ritz_pf2}
\end{align}
 and the following upper bound for (\ref{eq:lyapfunc_5}), one obtains
 \begin{align}
    \Delta V[\hat{G}_{\rm{av}}[k]] &\leq - \frac{\lambda_{\min}\{Q\}}{\lambda_{\max}\{P\}}\frac{\left(1 - \sigma\right)}{2}V[\hat{G}_{\rm{av}}[k]]\,. \label{eq:lyapfunc_6}
\end{align}
From (\ref{eq:lyapfunc_0}), $V[\hat{G}_{\rm{av}}[k+1]]= V[\hat{G}_{\rm{av}}[k]]+\Delta V[\hat{G}_{\rm{av}}[k]]$, and by using (\ref{eq:lyapfunc_6}), one has 
 \begin{align}
 V[\hat{G}_{\rm{av}}[k+1]]&= V[\hat{G}_{\rm{av}}[k]]+\Delta V[\hat{G}_{\rm{av}}[k]] \nonumber \\
    &\leq \left(1- \frac{\lambda_{\min}\{Q\}}{\lambda_{\max}\{P\}}\frac{\left(1 - \sigma\right)}{2}\right)V[\hat{G}_{\rm{av}}[k]]\,. \label{eq:lyapfunc_6_cc}
\end{align}
The recursive structure of (\ref{eq:lyapfunc_6_cc}) allows us to derive an upper bound for the evolution of the Lyapunov function (\ref{eq:lyapfunc}) over successive intervals such that we can iteratively relate the Lyapunov function at the current iteration step $k$ to its value at earlier triggering times. This approach leads to
 \begin{align}
 V[\hat{G}_{\rm{av}}[1]]&\leq \left(1- \frac{\lambda_{\min}\{Q\}}{\lambda_{\max}\{P\}}\frac{\left(1 - \sigma\right)}{2}\right)V[\hat{G}_{\rm{av}}[0]]\,, \nonumber \\
 V[\hat{G}_{\rm{av}}[2]]&\leq \left(1- \frac{\lambda_{\min}\{Q\}}{\lambda_{\max}\{P\}}\frac{\left(1 - \sigma\right)}{2}\right)V[\hat{G}_{\rm{av}}[1]] \nonumber \\
 &\leq \left(1- \frac{\lambda_{\min}\{Q\}}{\lambda_{\max}\{P\}}\frac{\left(1 - \sigma\right)}{2}\right)^{2}V[\hat{G}_{\rm{av}}[0]]\,, \nonumber \\
 \vdots ~~~~~ &\quad ~~~~~~~~~~~~~~~~~~~~~~~~\vdots \nonumber \\
 V[\hat{G}_{\rm{av}}[k]]&\leq \left(1- \frac{\lambda_{\min}\{Q\}}{\lambda_{\max}\{P\}}\frac{\left(1 - \sigma\right)}{2}\right)^{k}V[\hat{G}_{\rm{av}}[0]]\,.\label{eq:lyapfunc_7}
\end{align}
Now, lower bounding the left-hand side and upper bounding the right-hand size of (\ref{eq:lyapfunc_7}) with the corresponding sides of (\ref{eq:Rayleigh-Ritz_pf2}), one gets
 \begin{align}
 \lambda_{\min}\{P\}\|\hat{G}_{\rm{av}}[k]\|^{2}&\leq \left(1- \frac{\lambda_{\min}\{Q\}}{\lambda_{\max}\{P\}}\frac{\left(1 - \sigma\right)}{2}\right)^{k} \nonumber \\
 &\quad\times\lambda_{\max}\{P\}\|\hat{G}_{\rm{av}}[0]\|^{2}\,.\label{eq:lyapfunc_8_before} 
\end{align}
Therefore,
\begin{align}
 \|\hat{G}_{\rm{av}}[k]\|&\leq \sqrt{\frac{\lambda_{\max}\{P\}}{\lambda_{\min}\{P\}}}\left(1\mathbb{-} \frac{\lambda_{\min}\{Q\}}{\lambda_{\max}\{P\}}\frac{\left(1 \mathbb{-} \sigma\right)}{2}\right)^{\frac{k}{2}}\|\hat{G}_{\rm{av}}[0]\|\,.\label{eq:lyapfunc_8} 
\end{align}
Moreover, $H^\ast$ is invertible and, from (\ref{eq:hatG_av_k_1}), $\tilde{\theta}_{\rm{av}}[k] = H^{\ast -1}\,\hat{G}_{\rm{av}}[k]$,
the following inequality is established
\begin{align}
    \|H^{\ast -1}\|^{-1}\|\tilde{\theta}_{\rm{av}}[k]\| \leq \|\hat{G}_{\rm{av}}[k]\| \leq \|H^{\ast}\|\|\tilde{\theta}_{\rm{av}}[k]\|\,, \label{eq:normThetaG}
\end{align}
and, therefore, lower bounding the left-hand side and upper bounding the right-hand side of (\ref{eq:lyapfunc_8}) with the corresponding sides of (\ref{eq:normThetaG}), we can state that,
\begin{align}
 \|\tilde{\theta}_{\rm{av}}[k]\|&\mathbb{\leq} \sqrt{\frac{\lambda_{\max}\{P\}}{\lambda_{\min}\{P\}}}\|H^{\ast}\|\|H^{\ast\mathbb{-}1}\|\left(\!\!1\mathbb{-} \frac{\lambda_{\min}\{Q\}}{\lambda_{\max}\{P\}}\frac{\left(1 \mathbb{-} \sigma\right)}{2}\!\!\right)^{\frac{k}{2}} \nonumber \\
 &\quad\times\|\tilde{\theta}_{\rm{av}}[0]\|\,.\label{eq:lyapfunc_9} 
\end{align}
From (\ref{eq:LyapEq_1}), it follows that $P-Q=(I_{n}-h KH^{\ast})^{\top}P(I_{n}-h KH^{\ast})$. Since $P=P^{\top}>0$ and $Q=Q^{\top}>0$, there exists a matrix $R$ with independent columns such that $P=R^{\top}R$. Consequently, for any $v[k]\in \mathbb{R}^{n}$, one has $v^{\top}[k](I_{n}-h KH^{\ast})^{\top}P(I_{n}-h KH^{\ast})v[k]= v^{\top}[k](I_{n}-h KH^{\ast})^{\top}R^{\top}R(I_{n}-h KH^{\ast})v[k]=\|R(I_{n}-h KH^{\ast})v[k]\|^{2} \geq 0$. Therefore, the matrix $(I_{n}-h KH^{\ast})^{\top}P(I_{n}-h KH^{\ast}) \geq 0$, and thus $P-Q\geq0$. Let $\lambda_{1}\{P\}\leq\ldots\leq\lambda_{n}\{P\}$ and $\lambda_{1}\{Q\}\leq\ldots\leq\lambda_{n}\{Q\}$ denote the ordered eigenvalues of $P$ and $Q$, respectively. Since $P\geq Q$, it follows from the Loewner order \cite[Corollary~7.7.4]{HJ:2013} that $\lambda_{i}\{P\}\geq\lambda_{i}\{Q\}$ for each $i\in\{1,\ldots,n\}$. In particular, $\lambda_{\max}\{P\}\geq\lambda_{\max}\{Q\}\geq \lambda_{\min}\{Q\}$, which implies that $\frac{\lambda_{\min}\{Q\}}{\lambda_{\max}\{P\}}\leq1$. Moreover, since $\sigma \in (0,1)$, it follows that $\frac{\lambda_{\min}\{Q\}}{\lambda_{\max}\{P\}}\frac{\left(1 - \sigma\right)}{2} \in (0,1)$. Hence, the sequence $V[\hat{G}_{\rm{av}}[k]]$, $\hat{G}_{\rm{av}}[k]$, as well as $\tilde{\theta}_{\rm{av}}[k]$ in (\ref{eq:lyapfunc_7}), (\ref{eq:lyapfunc_8}) and (\ref{eq:lyapfunc_9}), converge exponentially to the origin.

Since (\ref{eq:f2_20260321_1}) is characterized by a $T$-periodic discrete-time vector field on each frozen inter-event interval, and noting that the corresponding average system with state variable $\tilde\theta_{\rm{av}}[k]$ is exponentially stable as established in (70), the conditions of \cite[Theorem~2.2.1]{BFS:1988} are satisfied on each such interval, with infinite-horizon validity established rigorously in Appendix~B (Theorem~4). %
In particular, for sufficiently small $h > 0$ and initial conditions $\tilde{\theta}[0]$ sufficiently close to the origin, the solutions of the original system and the average system remain close. 
Therefore,
\begin{align}
\|\tilde{\theta}[k]-\tilde{\theta}_{\rm{av}}[k]\|\leq\mathcal{O}\left(h\right)\,. \label{eq:plotnikov}
\end{align}
Now, adding and subtracting $\tilde{\theta}_{\rm{av}}[k]$ in the right-hand side of vector form of (\ref{eq:thetai_v1}), one has
\begin{align}
\theta[k] - \theta^\ast = \tilde{\theta}_{\rm{av}}[k]+\tilde{\theta}[k]-\tilde{\theta}_{\rm{av}}[k] + S[k]\,, \label{eq:q_ETSSC_v3cu}
\end{align}  
whose norm can be upper bounded by using the triangle inequality \cite{A:1957}, one gets
\begin{align}
\|\theta[k] - \theta^\ast\| \leq \|\tilde{\theta}_{\rm{av}}[k]\|+\|\tilde{\theta}[k]-\tilde{\theta}_{\rm{av}}[k]\| + \|S[k]\|\,.\label{eq:q_ETSSC_v3cu2}
\end{align} 
Since sine functions in (\ref{eq:S_v1}) are uniformly bounded, it is possible to derive a uniform upper bound for the Euclidean norm of $S[k]$ such that
\begin{align}
    \|S[k]\|\leq \sqrt{\sum_{i=1}^{n}a_{i}^{2}}=a \,.\label{eq:S_v2}
\end{align}
Thus, by using (\ref{eq:lyapfunc_9}), (\ref{eq:plotnikov}) and (\ref{eq:S_v2}), inequality (\ref{eq:q_ETSSC_v3cu2}) is upper bounded by 
\begin{align}
&\|\theta[k]\mathbb{-}\theta^{\ast}\|  \mathbb{\leq}  \sqrt{\frac{\lambda_{\max}\{P\}}{\lambda_{\min}\{P\}}}\|H^{\ast}\|\|H^{\ast\mathbb{-}1}\|\left(\!\!1\mathbb{-} \frac{\lambda_{\min}\{Q\}}{\lambda_{\max}\{P\}}\frac{\left(1 \mathbb{-} \sigma\right)}{2}\!\!\right)^{\!\!\frac{k}{2}} \nonumber \\
 &\times\|\theta[0]-\theta^{\ast}\|+\mathcal{O}\left(a+h\right)\,,\label{eq:q_ETSSC_v4}
\end{align} 
then, inequality (\ref{eq:normTheta_thm1}) is verified.

Notice that the Hessian matrix $H^{\ast} \in \mathbb{R}^{n \times n}$ is symmetric and definite, {\it i.e.}, either $H^{\ast}>0$ or $H^{\ast}<0$. Define
$\beta=
\begin{cases}
0, & \text{if } H^{\ast}>0\\
1, & \text{if } H^{\ast}<0 
\end{cases}\,,$
so that $(-1)^{\beta}H^{\ast}>0$. Then,
\begin{align}
\lambda_{\min}\!\left\{(\mathbb{-}1)^{\beta}H^{\ast}\right\}\|\theta[k]\mathbb{-}\theta^{\ast}\|^{2}
& \mathbb{\leq}(\theta[k]\mathbb{-}\theta^{\ast})^{\!\! \top \!\!}(\mathbb{-}1)^{\beta}H^{\ast}(\theta[k]\mathbb{-}\theta^{\ast})\nonumber \\
& \mathbb{\leq}\lambda_{\max}\!\left\{(\mathbb{-}1)^{\beta}H^{\ast}\right\}\|\theta[k]\mathbb{-}\theta^{\ast}\|^{2}. \label{eq:beta}
\end{align}
Now, from (\ref{eq:y_v2}), (\ref{eq:q_ETSSC_v4}) and (\ref{eq:beta}), one has 
\begin{align}
    |y[k] - Q^{\ast}|&=\frac{1}{2}|(\theta[k]-\theta^{\ast})^{\top}H^{\ast}(\theta[k]-\theta^{\ast})| \nonumber \\
    & \leq \frac{1}{2}\lambda_{\max}\!\left\{(-1)^{\beta}H^{\ast}\right\}\frac{\lambda_{\max}\{P\}}{\lambda_{\min}\{P\}} \|H^{\ast}\|^{2}\|H^{\ast\mathbb{-}1}\|^{2} \nonumber \\
 &~~~\times\left(1-\frac{\lambda_{\min}\{Q\}}{\lambda_{\max}\{P\}}\frac{\left(1-\sigma\right)}{2}\right)^{k}\|\theta[0]-\theta^{\ast}\|^{2} \nonumber \\
 &~~~+\lambda_{\max}\!\left\{(-1)^{\beta}H^{\ast}\right\}\sqrt{\frac{\lambda_{\max}\{P\}}{\lambda_{\min}\{P\}}} \|H^{\ast}\|\|H^{\ast\mathbb{-}1}\| \nonumber \\
 &~~~\times\left(1-\frac{\lambda_{\min}\{Q\}}{\lambda_{\max}\{P\}}\frac{\left(1-\sigma\right)}{2}\right)^{\frac{k}{2}}\|\theta[0]-\theta^{\ast}\| \nonumber \\
 &~~~\times\mathcal{O}\left(a+h\right)+\mathcal{O}\left(a+h\right)^2\,. \label{eq:yQ_1}
\end{align}
Applying the Peter-Paul inequality \cite{W:1971}, $cd\leq \frac{\varepsilon c^2}{2} +\frac{d^2}{2\varepsilon}$ for all $c,d,\varepsilon>0$, with $c=\sqrt{\dfrac{\lambda_{\max}\{P\}}{\lambda_{\min}\{P\}}} \|H^{\ast}\|\|H^{\ast\mathbb{-}1}\|\left(1-\frac{\lambda_{\min}\{Q\}}{\lambda_{\max}\{P\}}\frac{\left(1-\sigma\right)}{2}\right)^{\frac{k}{2}}\|\theta[0]-\theta^{\ast}\|$, $d=\mathcal{O}\left(a+h\right)$ and $\varepsilon=\frac{1}{2}$, the inequality (\ref{eq:yQ_1}) is upper bounded by
\begin{align}
    |y[k] - Q^{\ast}|& \leq \lambda_{\max}\!\left\{(-1)^{\beta}H^{\ast}\right\}\frac{\lambda_{\max}\{P\}}{\lambda_{\min}\{P\}} \|H^{\ast}\|^{2}\|H^{\ast\mathbb{-}1}\|^{2} \nonumber \\
 &~~~\times\left(1-\frac{\lambda_{\min}\{Q\}}{\lambda_{\max}\{P\}}\frac{\left(1-\sigma\right)}{2}\right)^{k}\|\theta[0]-\theta^{\ast}\|^{2} \nonumber \\
 &~~~+(2\lambda_{\max}\!\left\{(-1)^{\beta}H^{\ast}\right\}+1)\mathcal{O}\left(a+h\right)^2\,. \label{eq:yQ_2}
\end{align}
From (\ref{eq:beta}), $\lambda_{\min}\left\{(-1)^{\beta}H^{\ast}\right\}\|\theta[0]-\theta^{\ast}\|^{2} \leq |(\theta[0]-\theta^{\ast})^{ \top }H^{\ast}(\theta[0]\mathbb{-}\theta^{\ast})|=2|y[0]-Q^{\ast}|$ and, according to \cite[Definition~10.1]{K:2002}, $(2\lambda_{\max}\!\left\{(-1)^{\beta}H^{\ast}\right\}+1)\mathcal{O}\left(a+h\right)^{2}\leq \mathcal{O}\left(a+h\right)^{2} \leq \mathcal{O}\left(a+h\right)^{2} $ remains as an order of magnitude $\mathcal{O}\left(a^{2}+h^2\right)$. Thus, inequality (\ref{eq:yQ_2}) is upper bounded by
\begin{align}
    |y[k] - Q^{\ast}|& \leq 2\frac{\lambda_{\max}\!\left\{(-1)^{\beta}H^{\ast}\right\}}{\lambda_{\min}\!\left\{(-1)^{\beta}H^{\ast}\right\}}\frac{\lambda_{\max}\{P\}}{\lambda_{\min}\{P\}} \|H^{\ast}\|^{2}\|H^{\ast\mathbb{-}1}\|^{2} \nonumber \\
 &~~~\times\left(1-\frac{\lambda_{\min}\{Q\}}{\lambda_{\max}\{P\}}\frac{\left(1-\sigma\right)}{2}\right)^{k}|y[0]-Q^{\ast}| \nonumber \\
 &~~~+\mathcal{O}\left(a^{2}+h^2\right)\,. \label{eq:yQ_3}
\end{align}
such that inequality (\ref{eq:normY_thm1}) is verified.

\begin{flushleft}
\underline{\it \textcolor{black}{B. Exclusion of Persistent Triggering}}
\end{flushleft}

We now investigate the existence of a minimum interval between two consecutive triggering instants in the considered discrete-time framework. Due to the discrete-time nature of the system, inter-execution times are inherently lower bounded by one sampling step, and thus Zeno behavior is excluded by construction.

Accordingly, the objective is not to establish Zeno avoidance, but rather to ensure that the triggering mechanism yields a well-posed and nontrivial execution pattern. In particular, it is required that the event-triggering condition does not lead to pathological behaviors such as persistent triggering at every iteration, which would eliminate any computational or communication advantages.

To this end, we derive sufficient conditions under which the triggering rule guarantees strictly non-consecutive event occurrences over time, thereby ensuring a meaningful reduction in the number of control updates while preserving the desired closed-loop properties.

Notice, from (\ref{eq:lyapfunc_6}), for all $k \in [k_l, k_{l+1})$, by iteration,
 \begin{align}
 V[\hat{G}_{\rm{av}}[k]]&\leq \left(1- \frac{\lambda_{\min}\{Q\}}{\lambda_{\max}\{P\}}\frac{\left(1 - \sigma\right)}{2}\right)^{k-k_{l}}V[\hat{G}_{\rm{av}}[k_{l}]]\,,\label{eq:lyapfunc_10}
\end{align}
such that lower bounding the left-hand side and upper bounding the right-hand size of (\ref{eq:lyapfunc_10}) with the corresponding sides of (\ref{eq:Rayleigh-Ritz_pf2}), one gets
\begin{align}
 \|\hat{G}_{\rm{av}}[k]\|&\mathbb{\leq} \sqrt{\frac{\lambda_{\max}\{P\}}{\lambda_{\min}\{P\}}}\left(1\mathbb{-} \frac{\lambda_{\min}\{Q\}}{\lambda_{\max}\{P\}}\frac{\left(1 \mathbb{-} \sigma\right)}{2}\right)^{\!\!\!\frac{k\mathbb{-}k_{l}}{2}} \!\!\! \|\hat{G}_{\rm{av}}[k_{l}]\|\,.\label{eq:lyapfunc_11} 
\end{align}

Since (\ref{eq:f1_20260321_1}) is also characterized by a $T$-periodic discrete-time vector field on each frozen inter-event interval, and noting that the corresponding average system with state variable $\hat{G}_{\rm{av}}[k]$ is exponentially stable as established in (\ref{eq:lyapfunc_11}), the conditions of \cite[Theorem~2.2.1]{BFS:1988} are satisfied again on each such interval, with the infinite-horizon closeness estimate justified in Appendix~B (Theorem~4). %
In particular, for sufficiently small $h > 0$ and initial conditions $\hat{G}[0]$ sufficiently close to the origin, the solutions of the original system and the average system remain close. %
Therefore,
\begin{align}
\|\hat{G}[k]-\hat{G}_{\rm{av}}[k]\|\leq\mathcal{O}\left(h\right)\,. \label{eq:sastry_2}
\end{align}
Then, $\hat{G}[k]=\hat{G}_{\rm{av}}[k]+\hat{G}[k]-\hat{G}_{\rm{av}}[k]$ whose norm can be upper bounded by using the triangle inequality \cite{A:1957}, such that 
\begin{align}
\|\hat{G}[k]\|&=\|\hat{G}_{\rm{av}}[k]+\hat{G}[k]-\hat{G}_{\rm{av}}[k]\| \nonumber \\
&\leq \|\hat{G}_{\rm{av}}[k]\|+\|\hat{G}[k]-\hat{G}_{\rm{av}}[k]\| \,. \label{eq:normG_k_1}
\end{align}  
Thus, by using (\ref{eq:lyapfunc_11}), and (\ref{eq:sastry_2}), inequality (\ref{eq:normG_k_1}) is upper bounded by 
\begin{align}
\|\hat{G}[k]\|&\mathbb{\leq} \sqrt{\frac{\lambda_{\max}\{P\}}{\lambda_{\min}\{P\}}}\left(1\mathbb{-} \frac{\lambda_{\min}\{Q\}}{\lambda_{\max}\{P\}}\frac{\left(1 \mathbb{-} \sigma\right)}{2}\right)^{\!\!\!\frac{k\mathbb{-}k_{l}}{2}} \!\!\! \|\hat{G}_{\rm{av}}[k_{l}]\| \nonumber \\
&\quad+\mathcal{O}\left(h\right)\,. \label{eq:normG_k_2}
\end{align}  

Now, from (\ref{eq:hatG_av_k+1_1}) and (\ref{eq:e_av_k_1}), we can write, for all $k \in [k_l, k_{l+1})$, 
\begin{align}
\hat{G}_{\rm{av}}[k+1]&=(I_{n} -h H^{\ast}K)\hat{G}_{\rm{av}}[k]-h H^{\ast}Ke_{\rm{av}}[k]\,, \label{eq:hatG_av_k+1_2} \\
&=I_{n}\hat{G}_{\rm{av}}[k]-h H^{\ast}K\hat{G}_{\rm{av}}[k_l] \,, \label{eq:hatG_av_k+1_3} \\
e_{\rm{av}}[k+1] &= \hat{G}_{\rm{av}}[k_l] - \hat{G}_{\rm{av}}[k+1]\,. 
\label{eq:e_av_k_2}
\end{align}

By using (\ref{eq:hatG_av_k+1_3}), equation (\ref{eq:e_av_k_2}) can be rewritten as
\begin{align}
e_{\rm{av}}[k+1] &= \hat G_{\rm{av}}[k_l] - \hat G_{\rm{av}}[k+1] \nonumber\\
&= \big(\hat G_{\rm{av}}[k_l]-\hat G_{\rm{av}}[k]\big) + hH^*K\hat G_{\rm{av}}[k_l] \nonumber\\
&= e_{\rm{av}}[k] + hH^*K\hat G_{\rm{av}}[k_l]\,. \label{eq:eav_recursion}
\end{align}

Now, the iteration of (\ref{eq:hatG_av_k+1_3}) and (\ref{eq:eav_recursion}) for $k=\{k_l,k_l+1,k_l+2,\ldots\}$, with $k\in[k_l,k_{l+1})$, together with the condition $e_{\rm{av}}[k_l]=0$, yields
\begin{align}
\hat G_{\rm{av}}[k_l+1] &= (I_n-hH^*K)\hat G_{\rm{av}}[k_l]\,,\nonumber\\
&\;\;\vdots \nonumber\\
\hat G_{\rm{av}}[k] &= \big(I_n-(k-k_l)hH^*K\big)\hat G_{\rm{av}}[k_l]\,,\label{eq:Gav_closed}\\[4pt]
e_{\rm{av}}[k_l+1] &= hH^*K\hat G_{\rm{av}}[k_l]\,,\nonumber\\
&\;\;\vdots \nonumber\\
e_{\rm{av}}[k] &= (k-k_l)\,hH^*K\hat G_{\rm{av}}[k_l]\,. \label{eq:eav_closed}
\end{align}

By construction, (\ref{eq:Gav_closed})--(\ref{eq:eav_closed}) satisfy the identity $e_{\rm{av}}[k]=\hat G_{\rm{av}}[k_l]-\hat G_{\rm{av}}[k]$ exactly, as required by (\ref{eq:e_av_k_1}); the error accumulates linearly over the held interval, with neither a $(I_n-hH^*K)$ contraction factor nor a spurious factor of two.

By combining an upper bound on $\|e[k]\|$ with a lower bound on $\|\hat G[k]\|$, we can determine when the event-triggering condition must be violated. Therefore, by invoking the average method for discrete-time systems \cite[Theorem~2.2.1]{BFS:1988}, we have
\begin{align}
\|\hat G[k] - \hat G_{\rm{av}}[k]\| &\leq \mathcal{O}(h)\,, \label{eq:Gcloseness}\\
\|e[k] - e_{\rm{av}}[k]\| &\leq \mathcal{O}(h)\,, \label{eq:ecloseness}
\end{align}
such that, by using the triangle inequality [31], one additionally writes
\begin{align}
\|\hat G[k]\| &= \|\hat G_{\rm{av}}[k] + \hat G[k]-\hat G_{\rm{av}}[k]\| \nonumber\\
&\geq \big|\|\hat G_{\rm{av}}[k]\| - \|\hat G[k]-\hat G_{\rm{av}}[k]\|\big|\,, \label{eq:Gtriangle}\\
\|e[k]\| &= \|e_{\rm{av}}[k]+e[k]-e_{\rm{av}}[k]\| \nonumber\\
&\leq \|e_{\rm{av}}[k]\| + \|e[k]-e_{\rm{av}}[k]\|\,. \label{eq:etriangle}
\end{align}

Using (\ref{eq:Gav_closed}), (\ref{eq:eav_closed}), (\ref{eq:Gcloseness}) and (\ref{eq:ecloseness}), the following bounds are obtained for (\ref{eq:Gtriangle}) and (\ref{eq:etriangle}), respectively:
\begin{align}
\|\hat G[k]\| &\geq \Big\|\big(I_n-(k-k_l)hH^*K\big)\hat G_{\rm{av}}[k_l]\Big\| - \mathcal{O}(h) =: \underline{\hat G}[k]\,, \label{eq:Glow}\\
\|e[k]\| &\leq (k-k_l)\,h\,\|H^*K\hat G_{\rm{av}}[k_l]\| + \mathcal{O}(h) =: \bar e[k]\,. \label{eq:eup}
\end{align}

At this stage, the idea is to combine the upper bound on the error $\bar e[k]$ in (\ref{eq:eup}) with the lower bound on the gradient estimate $\underline{\hat G}[k]$ in (\ref{eq:Glow}) in order to determine the minimum number of iterations after which the event-triggering condition is necessarily violated. In other words, if at some iteration $k$ one has $\sqrt\sigma\,\underline{\hat G}[k] - \alpha\,\bar e[k] \leq 0$, then the condition (\ref{eq:k_next_event}) can no longer be ensured, and thus an event must occur. Therefore, the first iteration $k^*$ such that $\sqrt\sigma\,\underline{\hat G}[k]-\alpha\,\bar e[k]\leq0$ provides an estimate of the minimum number of iterations after which the event-triggering condition is necessarily violated, i.e.,
\begin{align}
k^* := \arg\min_{k\in\mathbb{N}}\Big\{&\alpha\,(k-k_l)\,h\,\|H^*K\hat G_{\rm{av}}[k_l]\| + \mathcal{O}(h) \nonumber\\
&\!\!\!\!\!\!\!\!\!\!\!\!\!\!\!\!\!\!\!\!\!\!\!\! \geq \sqrt\sigma\Big(\big\|\big(I_n-(k-k_l)hH^*K\big)\hat G_{\rm{av}}[k_l]\big\| - \mathcal{O}(h)\Big)\Big\}\,. \label{eq:kstar}
\end{align}

It remains to verify that $k^* \geq 2$ whenever $\hat G_{\rm{av}}[k_l] \neq 0$ and $h$ is sufficiently small. Evaluate the margin $\sqrt{\sigma}\,\bar{\hat G}[k] - \alpha\,\bar e[k]$ at $k = k_l + m$ for $m \in \{0,1\}$, using (\ref{eq:Glow})--(\ref{eq:eup}):

\emph{Step $m=0$:} $\bar e[k_l] = \mathcal{O}(h)$ and $\bar{\hat G}[k_l] = \|\hat G_{\rm{av}}[k_l]\| - \mathcal{O}(h)$, so
\begin{equation}
\sqrt{\sigma}\,\bar{\hat G}[k_l] - \alpha\,\bar e[k_l] = \sqrt{\sigma}\,\|\hat G_{\rm{av}}[k_l]\| - \mathcal{O}(h) > 0 \label{eq:margin_m0}
\end{equation}
for $h$ small enough, so no event fires at $k=k_l$.

\emph{Step $m=1$:} $\bar e[k_l+1] = h\|H^*K\hat G_{\rm{av}}[k_l]\| + \mathcal{O}(h) = \mathcal{O}(h)$ and $\bar{\hat G}[k_l+1] = \|(I_n-hH^*K)\hat G_{\rm{av}}[k_l]\| - \mathcal{O}(h) = \|\hat G_{\rm{av}}[k_l]\| + \mathcal{O}(h)$, so
\begin{equation}
\sqrt{\sigma}\,\bar{\hat G}[k_l+1] - \alpha\,\bar e[k_l+1] = \sqrt{\sigma}\,\|\hat G_{\rm{av}}[k_l]\| + \mathcal{O}(h) > 0\,, \label{eq:margin_m1}
\end{equation}
so no event fires at $k=k_l+1$ either. Hence $k^* \geq 2$, and combined with the lower-bound property in (97),
\begin{equation}
2 \leq k^* \leq k_{l+1} - k_l\,, \label{eq:kstar_final}
\end{equation}
so, by  \cite[Proposition~2]{EDK:2010}, the event-triggered rule (\ref{eq:k_next_event}) is nontrivial, in the sense that it takes at least two steps for the next controller update. This completes the proof.
\end{proof}

\section{Simulation results} \label{sec:sim_siso}

To illustrate the main features of the proposed discrete-time event-triggered extremum-seeking strategy, consider the nonlinear map in (\ref{eq:y_v2}), with input $\theta[k]\in \mathbb{R}^{2}$, output $y[k]\in \mathbb{R}$, and unknown parameters $H^{\ast}=\begin{bmatrix}
    100 & 30 \\
    30 & 20
\end{bmatrix}$, $Q^{\ast}=100$, and $\theta^{\ast}=\begin{bmatrix}
    2 &
    4
\end{bmatrix}^{\top}$. The dither signal is selected with parameters $a_{1}=a_{2}=0.1$, $\omega_{1}=7$ [rad/sec] and $\omega_{2}=5$ [rad/sec], while the event-triggering parameters are chosen as $\sigma=0.8$ and $\alpha=1.70$. The control gain is set to $K=\begin{bmatrix}
    0.05 & 0 \\
    0 & 0.05
\end{bmatrix}$, the step size is $h=0.18$ [sec], and the initial condition is $\hat{\theta}[0]=\begin{bmatrix}
    2.5 & 5
\end{bmatrix}^{\top}$.

In Fig.~\ref{fig:theta_siso} and Fig.~\ref{fig:y_siso}, we can check $\theta[k]$ converging to $\theta^{\ast}$ and the output $y[k]$ approaching its corresponding extremum value. 
Fig.~\ref{fig:U_siso} shows the aperiodic pattern of the control update instants, characterized by the nonuniform distribution of inter-execution intervals inherent to the event-triggered implementation. In this framework, the control input is updated only when the triggering condition is satisfied. Over 6000 iterations, the control law is updated only 32 times, yielding an average inter-execution interval of $33.75$ seconds.
Fig.\ref{fig:hatG_siso} presents the evolution of the gradient estimate, showing that it converges to zero while remaining well behaved along the iterations. The event-triggered mechanism ensures that all signals remain bounded and exhibit smooth convergence, as expected.

These results highlight the effectiveness of the proposed discrete-time event-triggered extremum-seeking scheme in significantly reducing the number of control updates while still ensuring convergence to the extremum.

\begin{figure*}[h!]
	\centering
	\subfigure[Input of the nonlinear map, \mbox{$\theta[k]$}. \label{fig:theta_siso}]{\includegraphics[width=8.8cm]{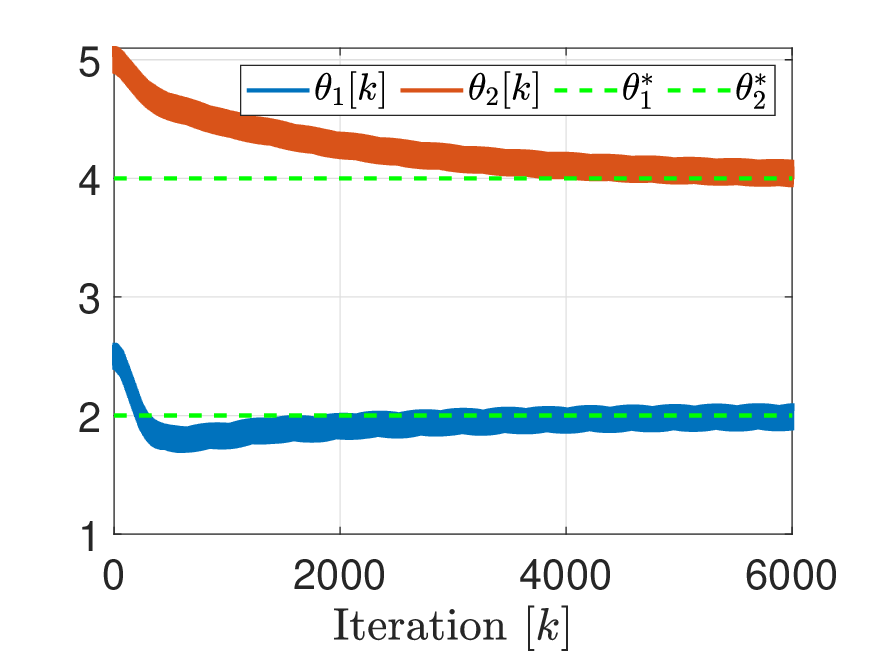}}
    ~~
    \subfigure[Output of the nonlinear map, \mbox{$y[k]$}. \label{fig:y_siso}]{\includegraphics[width=8.8cm]{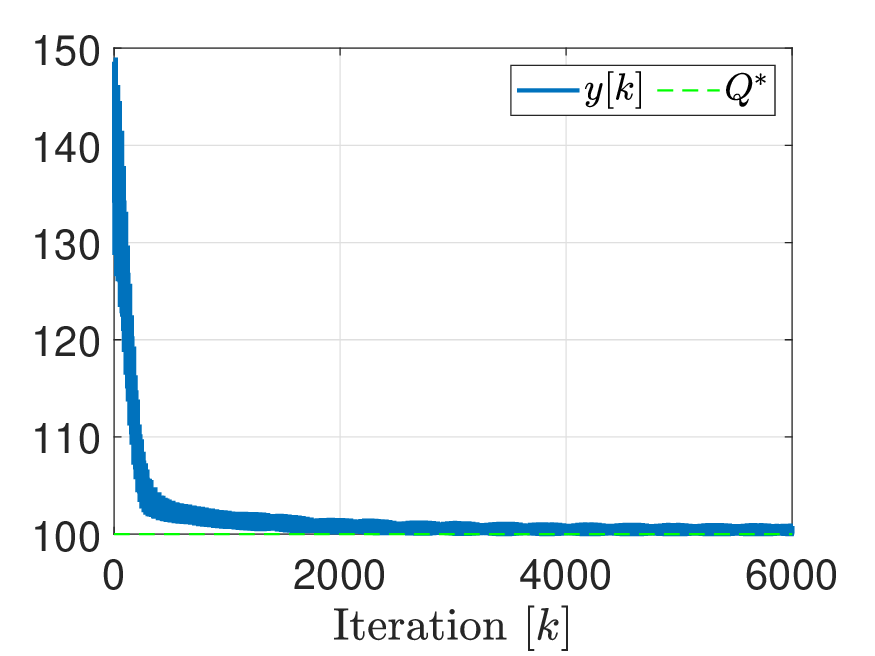}}
	\subfigure[Control input, \mbox{$u[k]$} \label{fig:U_siso}]{\includegraphics[width=8.8cm]{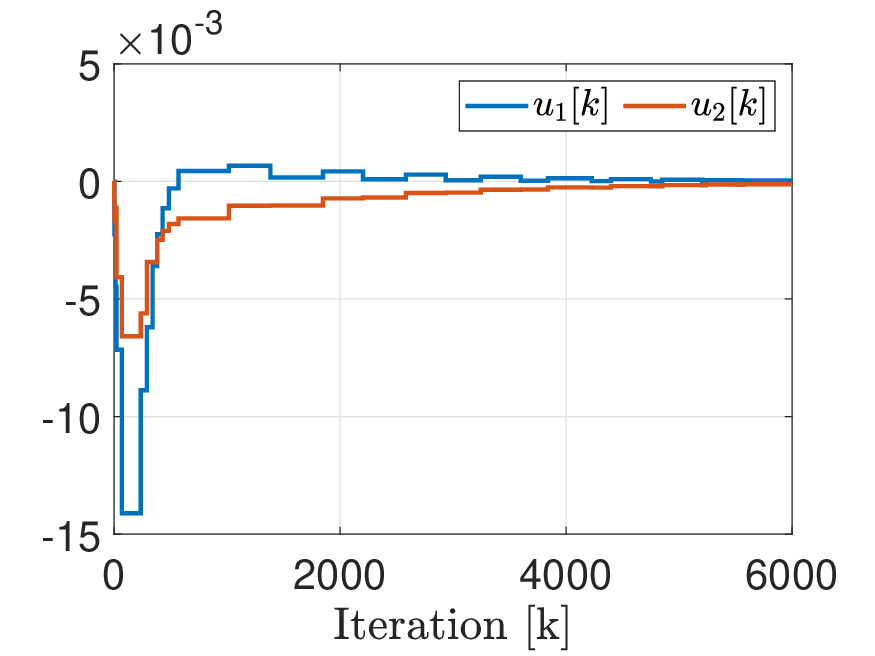}}
	~~
	\subfigure[Gradient estimate, \mbox{$\hat{G}[k]$}. \label{fig:hatG_siso}]{\includegraphics[width=8.8cm]{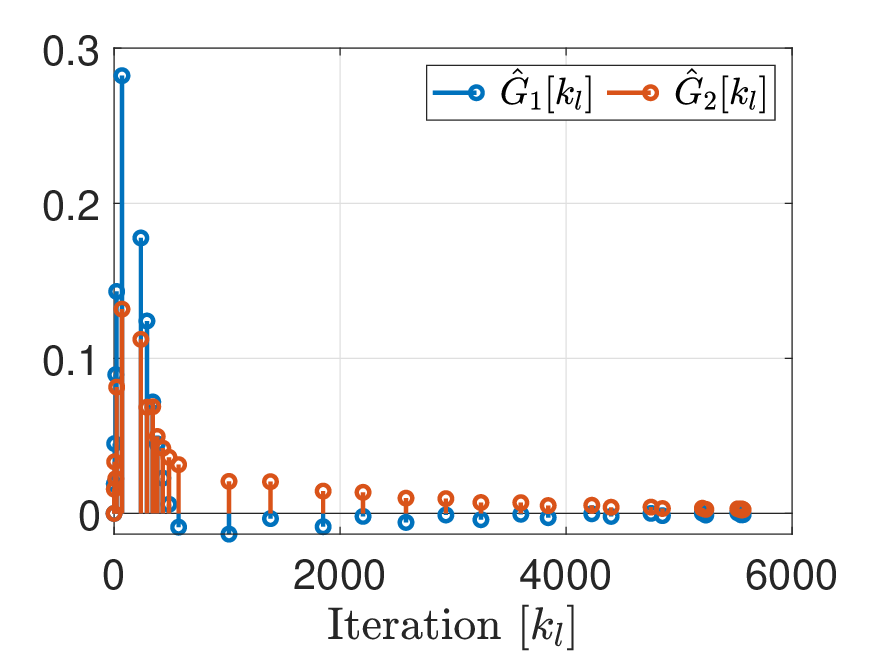}}   
	\caption{Simulations of the Event-triggered Discrete-time Multivariable  Extremum Seeking Control System. \label{fig:NESC_siso}}
    \begin{picture}(0,0)
        \put(-170,322){\includegraphics[width=3cm]{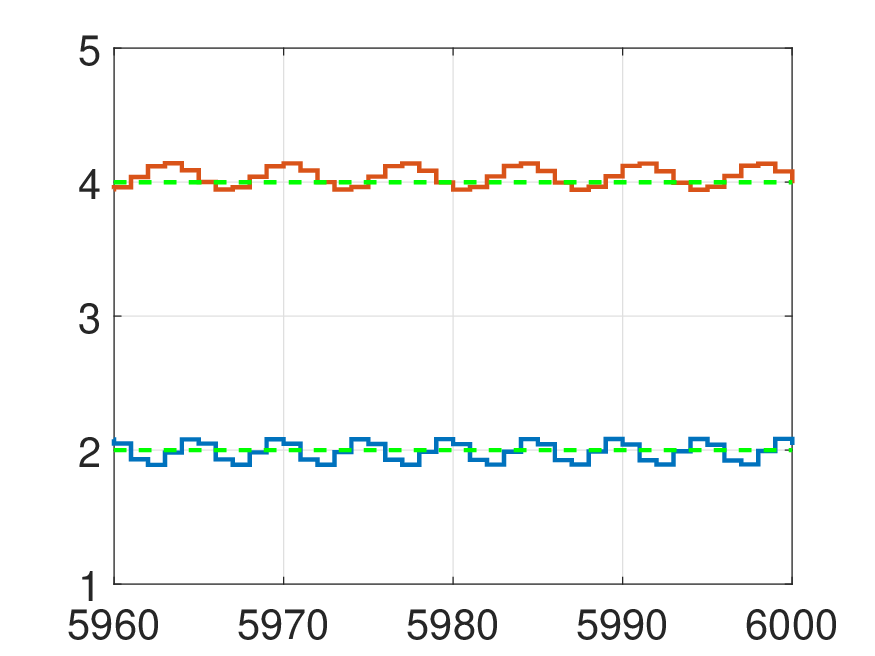}}
        \put(75,302){\includegraphics[width=4.5cm]{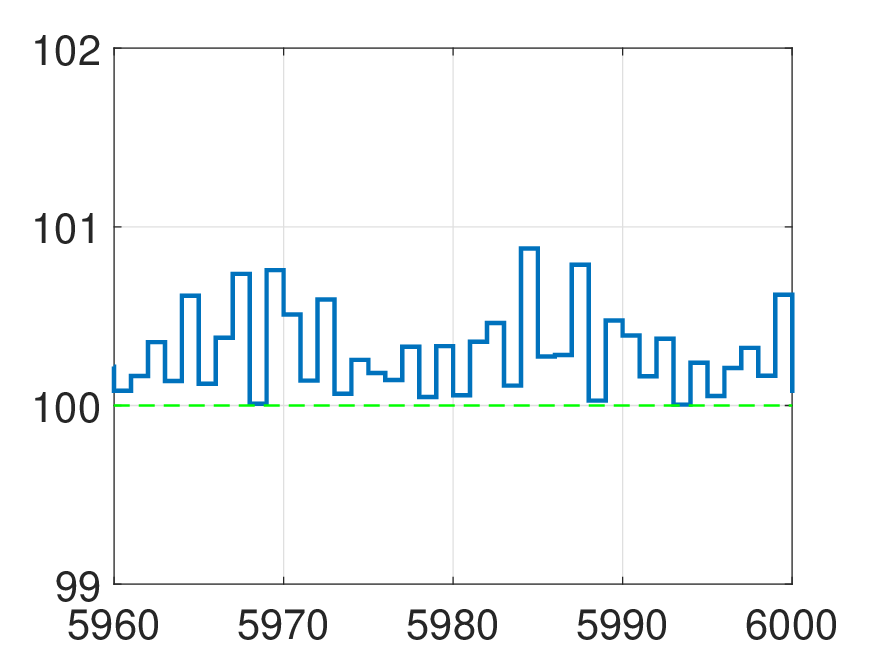}}
        \put(-185,85){\includegraphics[width=4cm]{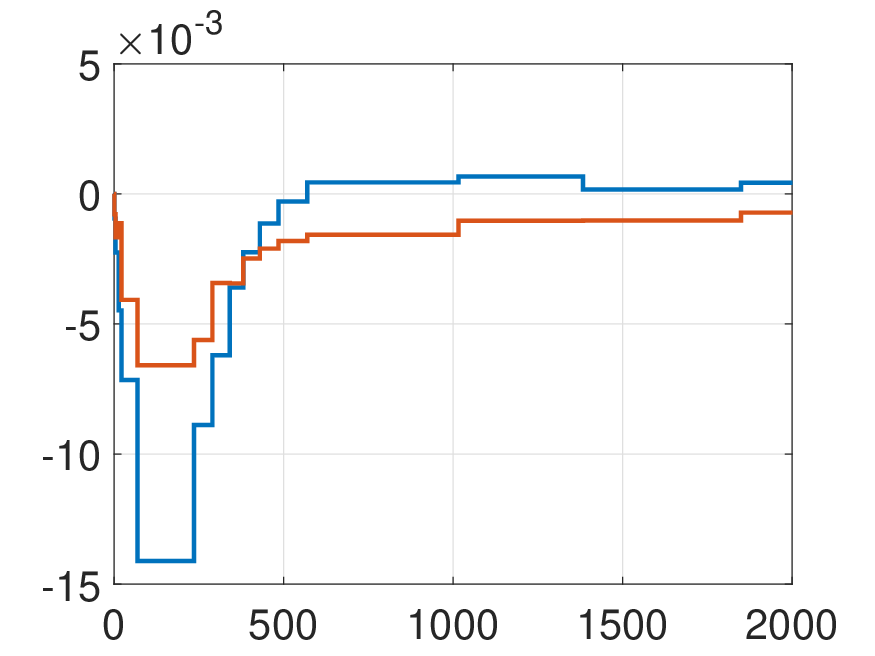}}
        \put(75,95){\includegraphics[width=4.5cm]{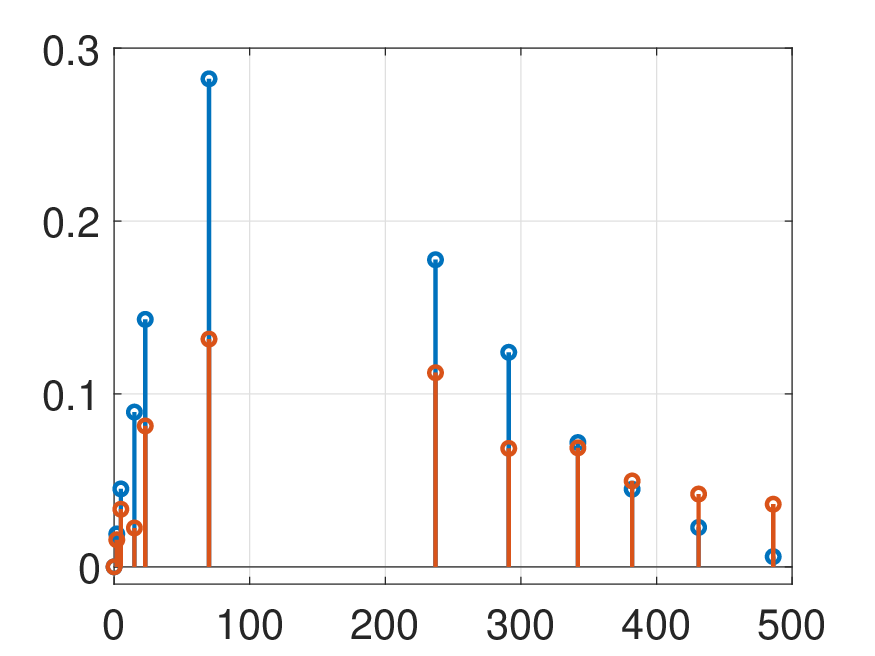}}
    \end{picture}
\end{figure*}
%


\section{Conclusions}

This paper developed a discrete-time event-triggered extremum seeking framework for the real-time optimization of multivariable static nonlinear maps, in which control updates are executed only when dictated by a state-dependent triggering condition. Differently from scalar formulations, the multivariable setting introduces coupling among channels and nontrivial sampled-data interactions that directly affect gradient recovery and stability properties under aperiodic updates. By combining discrete-time averaging arguments with Lyapunov-based analysis of the triggered dynamics, we established practical convergence of the closed-loop trajectories to a neighborhood of the unknown extremum and exponential stability of the associated average system. These results demonstrate that appropriately designed triggering rules preserve the essential optimization mechanisms of classical extremum seeking despite the loss of periodic sampling, while significantly reducing the number of input updates required for convergence.

The proposed framework provides a systematic bridge between extremum seeking and event-triggered control in discrete-time multivariable contexts, showing that resource-aware implementations can be rigorously reconciled with feedback optimization schemes that fundamentally rely on periodic excitation. Numerical results illustrated that comparable optimization performance can be achieved with substantially fewer actuation events, highlighting the suitability of the method for digital and networked control platforms. Future research directions include extensions to multi-agent and distributed extremum seeking architectures, robustness analysis in the presence of model uncertainties and delays \cite{TRoux:2022, ARC_2023, TDS_2025a, TDS_2025b,OKT:2017}, and experimental validation in embedded implementations, where the benefits of event-triggered discrete-time optimization are expected to be particularly impactful \cite{7035079, 7892977, 8827636, 10772652, 11219171, 11316674, 11316266, 11270207, 11316250, 11250918}.

\begin{small}

\end{small}

\appendices

\section*{Appendix}
\setcounter{equation}{0}
\renewcommand{\theequation}{\Alph{subsection}.\arabic{equation}}

\subsection{Averaging for Discrete-Time Systems \cite{BFS:1988,PPY:2004}}
\label{corpuschristi}

\textcolor{black}{
\begin{small}
In this section, we summarize the main results of discrete-time averaging theory. The proofs can be found in \cite{BFS:1988,PPY:2004} and are omitted here. 
Consider a difference equation of the form
\begin{equation}
x[k+1]=x[k]+\epsilon f(k,x[k],\epsilon)\,, \label{eq:BFS1988_eq1}
\end{equation}
where $x\in \mathbb{R}^n$, $k\in \mathbb{Z}_+$, $0<\epsilon\leq\epsilon_0$ and $f$ is piecewise continuous in $k$ with the limit
\begin{equation}
f_{\rm{av}}(x)=\lim_{\epsilon\rightarrow 0}\lim_{T\rightarrow\infty}
\frac{1}{T}
\sum_{k=s+1}^{s+T}
f(k,x,\epsilon) \,,  \label{eq:BFS1988_eq2}
\end{equation}
existing uniformly in $s$ and $\forall x\in B_r$, \textcolor{black}{a closed ball in $\mathbb{R}^n$ of radius $r$}. Assume that $f$ and $f_{\rm{av}}$ satisfy the following conditions ($\forall x\in B_r$, $0<\epsilon\leq\epsilon_0$ and $k\in \mathbb{Z}_+$):
\begin{itemize}
\item[(A1)]
$f(k,x,\epsilon)$ is Lipschitz in $x$, $$\|f(k,x_1,\epsilon)-f(k,x_2,\epsilon)\| \leq l_1\|x_1-x_2\|.$$
\item[(A2)] $f(k,x,\epsilon)$ is Lipschitz in $\epsilon$, linearly in $x$, $$\|f(k,x,\epsilon_1)-f(k,x,\epsilon_2)\| \leq l_2|\epsilon_1-\epsilon_2|\|x\|.$$
\item[(A3)]
$f_{\rm{av}}(x)$ is Lipschitz in $x$, $$\|f_{\rm{av}}(x_1)-f_{\rm{av}}(x_2)\| \leq l_{\rm{av}}\|x_1-x_2\|.$$
\item[(A4)] The average system of (\ref{eq:BFS1988_eq1}) is defined as
\begin{align}
x_{\rm{av}}[k+1]=x_{\rm{av}}[k] +\epsilon f_{\rm{av}}(x_{\rm{av}}[k])\,. \label{eq:BFS1988_eq3}
\end{align}
\end{itemize}
Then, we have the following theorems.
\begin{theorem}[Basic Averaging Theorem] \label{corpuschristi2}
Consider the original system (\ref{eq:BFS1988_eq1}) and the average system (\ref{eq:BFS1988_eq3}) satisfying the assumptions (A1)--(A4). For any given $T\in \mathbb{Z}_+$, further assume that the initial condition $x_0$ is sufficiently close to the equilibrium point $x^*$ of (\ref{eq:BFS1988_eq3}) 
so that $x_{\rm{av}}[k]\in B_r$, for some $r'<r$ and $k\in [0,[T/\epsilon]]$ (where $[T/\epsilon]$ denotes the largest integer $l$ such that $l\leq T/\epsilon$). Then, there is an $\epsilon_T$, $0<\epsilon_T\leq\epsilon_0$ and a class-$\mathcal{K}$ function $\Psi[\epsilon]$ (which is locally upper bounded by a linear function on its domain), so that
\begin{align}
    \|x[k]-x_{\rm{av}}[k]\| &\leq \Psi[\epsilon]\,b_T  \nonumber \\
    &\leq \mathcal{O}(\epsilon)\,, \label{argentinaeaustria}
\end{align}
for some $b_T>0$, on $k\in [0,[T/\epsilon]]$ and $0<\epsilon\leq\epsilon_T$.
\end{theorem}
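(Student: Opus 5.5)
The statement just above is the Basic Averaging Theorem (Theorem~\ref{corpuschristi2}). I would prove it by the standard near-identity transformation argument for discrete-time averaging. The idea is to compare the original and average systems on the finite horizon $k\in[0,[T/\epsilon]]$ through a discrete Gronwall argument. The oscillating part of the vector field is absorbed into a bounded "sum of deviations" function.

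First I would split the field as $f(k,x,\epsilon)=f_{\rm av}(x)+\tilde f(k,x,\epsilon)$ and introduce the partial sums
\begin{equation*}
w(k,x,\epsilon):=\sum_{j=0}^{k-1}\tilde f(j,x,\epsilon).
\end{equation*}
By the uniform-in-$s$ existence of the limit (\ref{eq:BFS1988_eq2}), together with (A2), there is a class-$\mathcal{K}$ function $\gamma$ such that $\|w(k,x,\epsilon)\|\le \gamma(\cdot)\,k$ with $\gamma$ vanishing as $k\to\infty$ and $\epsilon\to0$. I would then regularize, as in Bai--Fu--Sastry, by setting $w_\eta(k,x)=\sum_{j<k}(1-\eta)^{k-j-1}\tilde f(j,x,\epsilon)$. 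This gives $\eta\|w_\eta\|\le\xi(\eta)$ and $\eta\|\partial_x w_\eta\|\le\xi(\eta)$ with $\xi\in\mathcal{K}$, where the Lipschitz bound (A1) is used for the $x$-increments. With the choice $\eta=\epsilon$, the function $\Psi[\epsilon]:=\xi(\epsilon)$ emerges.

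Next I would define the transformed state $z[k]=x[k]-\epsilon w_\epsilon(k,x[k])$, which is $\xi(\epsilon)$-close to $x[k]$. Its increment is
\begin{equation*}
z[k+1]-z[k]=\epsilon f_{\rm av}(z[k])+\epsilon\,p(k,z,\epsilon),
\end{equation*}
where $\|p\|\le c\,\xi(\epsilon)$ on $B_r$. To obtain this bound I would use the recursion $w_\epsilon(k+1)=(1-\epsilon)w_\epsilon(k)+\tilde f(k)$, then (A1) and (A3) to move arguments between $x$ and $z$, and (A2) for the $\epsilon$-dependence. Subtracting the average recursion (\ref{eq:BFS1988_eq3}) and using the Lipschitz constant $l_{\rm av}$ gives
\begin{equation*}
\|z[k]-x_{\rm av}[k]\|\le(1+\epsilon l_{\rm av})^{k}\,\|z[0]-x_{\rm av}[0]\|+\epsilon k\,c\,\xi(\epsilon)(1+\epsilon l_{\rm av})^{k}.
\end{equation*}
For $k\le T/\epsilon$ both the exponential factor and $\epsilon k$ are bounded by $e^{l_{\rm av}T}$ and $T$. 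The triangle inequality then yields $\|x[k]-x_{\rm av}[k]\|\le\Psi[\epsilon]\,b_T$ with $b_T=(1+cT)e^{l_{\rm av}T}+1$.

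One must also keep every trajectory inside $B_r$ so that the hypotheses apply. Since $x_{\rm av}$ remains in $B_{r'}$, I would choose $\epsilon_T$ small enough that $\Psi[\epsilon]b_T<r-r'$, and conclude by a first-exit-time argument. The main obstacle is the construction of $w_\epsilon$ with the uniform $\xi(\epsilon)$ bounds on both $w$ and its $x$-derivative (or Lipschitz constant). The limit (\ref{eq:BFS1988_eq2}) gives only a sublinear growth rate for the raw sums, and converting that into the $\mathcal{O}(\epsilon)$ rate requires exploiting the periodic structure: for $T$-periodic $f$ the raw sum $w$ is itself bounded, so $\xi(\epsilon)=\mathcal{O}(\epsilon)$. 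That is exactly where the linear upper bound on $\Psi$ comes from.
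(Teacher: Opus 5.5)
The paper omits the proof of this theorem and defers to Bai--Fu--Sastry. Your outline follows that standard route: exponentially weighted sums $w_\epsilon$, a near-identity change of variables, a discrete Gronwall estimate on $[0,[T/\epsilon]]$, and an exit-time argument. Defining $z=x-\epsilon w_\epsilon(k,x)$ instead of $x=z+\epsilon w_\epsilon(k,z)$ is a harmless variant, and it spares you the homeomorphism step.

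The gap is your claim that (A1) gives $\epsilon\|\partial_x w_\epsilon\|\le\xi(\epsilon)$ with $\xi\in\mathcal{K}$. Pushing the Lipschitz constants of $f$ and $f_{\rm av}$ through the weights yields only $\epsilon\,\mathrm{Lip}_x w_\epsilon\le l_1+l_{\rm av}$, which does not vanish. Your $z$-increment contains $\epsilon\,[w_\epsilon(k,x[k])-w_\epsilon(k,x[k+1])]$. Since $\|x[k+1]-x[k]\|=\epsilon\|f\|$, this term is then bounded only by $(l_1+l_{\rm av})\,\epsilon\|f\|$. That adds an $\mathcal{O}(1)$ quantity to $p$, of the same order as the averaged drift, and your Gronwall bound degrades to an $\mathcal{O}(1)$ error over $[T/\epsilon]$ steps. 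A vanishing bound requires the $x$-increments of the oscillating part to average out too. The standard statements (Bai--Fu--Sastry, and Sastry--Bodson in continuous time) impose this as a separate hypothesis: $\partial d/\partial x$ has zero mean with a convergence function, where $d(k,x)=f(k,x,0)-f_{\rm av}(x)$. It is not among (A1)--(A4) as listed in the paper.

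Two repairs are needed.

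\textbf{First}, put only $d(k,x)$ into the sum and carry $f(k,x,\epsilon)-f(k,x,0)$ directly in $p$, where (A2) bounds it by $l_2\epsilon\|x\|$. (A1)--(A2) give no $x$-Lipschitz constant for this difference that vanishes with $\epsilon$. Summing it inside $w_\epsilon$ would recreate the $\mathcal{O}(1/\epsilon)$ Lipschitz constant even for periodic fields.

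\textbf{Second}, use periodicity for \emph{both} bounds, not only for $\|w\|$. If $f(\cdot,x,0)$ is $N$-periodic, then $\sum_{j=s+1}^{s+N}[d(j,x_1)-d(j,x_2)]=0$ for all $s$. Hence $W(k,x)=\sum_{j<k}d(j,x)$ is bounded by $N\sup_{j,\,x\in B_r}\|d(j,x)\|$ and is $N(l_1+l_{\rm av})$-Lipschitz in $x$, uniformly in $k$. Summation by parts transfers both bounds to $w_\epsilon$ up to a factor $2$; alternatively, drop the weights and use $W$ itself. This gives $\xi(\epsilon)=\mathcal{O}(\epsilon)$, and the rest of your argument then goes through.

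Your remark on the rate is correct, and the issue is unavoidable. From the uniform-limit hypothesis alone, a convergence function $\gamma(N)\sim N^{-r}$ yields only $\Psi(\epsilon)=\mathcal{O}(\epsilon^{r})$, and this can be sharp. So the linear bound on $\Psi$ genuinely needs periodicity, or $\gamma(N)=\mathcal{O}(1/N)$ together with the derivative condition. Your proof therefore establishes the statement under that added hypothesis, which you should state explicitly. In this paper it is supplied by the $T$-periodicity of the frozen fields.
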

\begin{theorem}[Exponential Stability Theorem to Residual Sets] \label{corpuschristi3}
Suppose that the conditions of Theorem~\ref{corpuschristi2} are satisfied. 
\textcolor{black}{In addition, if the equilibrium point $x^*$ of the average system (\ref{eq:BFS1988_eq3}) is exponentially stable, then there exists an $\epsilon_1$, $0<\epsilon_1\leq\epsilon_0$, such that (\ref{argentinaeaustria}) is valid for all $k \geq 0$ (infinite horizon) and  $x[k]$ in the original system (\ref{eq:BFS1988_eq1}) locally exponentially converges to an $\mathcal{O}(\epsilon)$ neighborhood of $x^*$ for all $0<\epsilon\leq\epsilon_1$ and $k\geq 0$.}
\end{theorem}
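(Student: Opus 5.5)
The plan is to bootstrap the finite-horizon estimate of Theorem~\ref{corpuschristi2} to an infinite-horizon one. I will chain consecutive windows of length $[T/\epsilon]$ and use the exponential stability of $x^*$ for (\ref{eq:BFS1988_eq3}) to make the errors from successive windows contract geometrically. Without loss of generality take $x^*=0$. Exponential stability of the average system means there are $m\geq 1$ and $\lambda>0$ such that
\begin{equation*}
\|x_{\rm{av}}[k]\|\leq m\,e^{-\lambda\epsilon (k-k_0)}\|x_{\rm{av}}[k_0]\|
\end{equation*}
for initial data in a ball $B_{r_0}$, $r_0<r$. The rate is measured in the slow time $\epsilon k$, which is the natural scaling for (\ref{eq:BFS1988_eq3}). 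I then fix the horizon $T$ so large that $m\,e^{-\lambda T}\leq 1/4$, and set $N=[T/\epsilon]$.

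The first key step is continuous dependence of the average flow on its initial data. Using (A3), a discrete Gronwall argument gives, for two average trajectories on a window of length $N$,
\begin{equation*}
\|x^1_{\rm{av}}[k]-x^2_{\rm{av}}[k]\|\leq (1+\epsilon l_{\rm{av}})^{k}\|x^1_{\rm{av}}[0]-x^2_{\rm{av}}[0]\|\leq e^{l_{\rm{av}}T}\|x^1_{\rm{av}}[0]-x^2_{\rm{av}}[0]\|.
\end{equation*}

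The second step is the window recursion. On the $j$-th window $[jN,(j+1)N]$, I restart an auxiliary average trajectory $z_j$ at $z_j[jN]=x[jN]$. Theorem~\ref{corpuschristi2} then gives $\|x[k]-z_j[k]\|\leq \Psi[\epsilon]b_T$ on that window. Here it is essential that $b_T$ and $\epsilon_T$ are uniform over initial conditions in $B_{r'}$, which follows from the proof of that theorem, since (A1)--(A2) and the uniform convergence in (\ref{eq:BFS1988_eq2}) are uniform on $B_r$. Writing $x[k]-x_{\rm{av}}[k]=(x[k]-z_j[k])+(z_j[k]-x_{\rm{av}}[k])$, I estimate the second term in two ways. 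By the Gronwall bound it is at most $e^{l_{\rm{av}}T}\|x[jN]-x_{\rm{av}}[jN]\|$ inside the window. At the window endpoint I instead use the contraction $m e^{-\lambda T}\leq 1/4$, applied to the incremental (difference) dynamics. This yields a recursion for $d_j:=\|x[jN]-x_{\rm{av}}[jN]\|$ of the form $d_{j+1}\leq \tfrac14 d_j+\Psi[\epsilon]b_T$, with $d_0=0$. Summing the geometric series gives $d_j\leq \tfrac43\Psi[\epsilon]b_T$ for all $j$. Hence, for all $k\geq0$,
\begin{equation*}
\|x[k]-x_{\rm{av}}[k]\|\leq \bigl(1+\tfrac43 e^{l_{\rm{av}}T}\bigr)\Psi[\epsilon]b_T=\mathcal{O}(\epsilon).
\end{equation*}
This is (\ref{argentinaeaustria}) on the infinite horizon. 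Combining it with the exponential decay of $x_{\rm{av}}$ gives $\|x[k]\|\leq m e^{-\lambda\epsilon k}\|x[0]\|+\mathcal{O}(\epsilon)$, which is the claimed convergence to an $\mathcal{O}(\epsilon)$ residual set.

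The main obstacle is the contraction of the \emph{difference} between two average trajectories at the window endpoint. Exponential stability of the equilibrium alone gives decay of each trajectory, not of their difference. For a nonlinear $f_{\rm{av}}$ I would obtain the difference contraction locally from a converse Lyapunov function $W$ of the average system, with quadratic bounds and $\Delta W\leq -c\,\epsilon\|x\|^2$, together with $\|\partial W/\partial x\|\leq c_4\|x\|$. Alternatively, one can bypass the difference and bound $\|x[(j+1)N]\|\leq \tfrac14\|x[jN]\|+\Psi[\epsilon]b_T$ directly, which suffices for the residual-set claim. A second, coupled issue is keeping every window's initial condition inside $B_{r'}$. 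I handle this by induction on $j$: choose $\|x[0]\|$ small and $\epsilon_1\leq\epsilon_T$ small enough that $\tfrac14 r'+\Psi[\epsilon_1]b_T\leq r'/m$. This ensures the hypotheses of Theorem~\ref{corpuschristi2} are re-verified at every restart, and yields the required $\epsilon_1$.
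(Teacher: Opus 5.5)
The paper gives no proof of this statement; it quotes it from the discrete-time averaging literature and omits the proof. Your argument therefore has to stand on its own.

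\textbf{The residual-set half works, via your bypass.} You restart $z_j$ at $x[jN]$ and apply Theorem~\ref{corpuschristi2} on each window. This is legitimate at initial time $jN$, because (A1)--(A2) and the limit defining $f_{\rm av}$ are uniform in $k$ and $s$. Using only the decay of the single trajectory $z_j$, you get $\|x[(j+1)N]\|\le\frac14\|x[jN]\|+\Psi[\epsilon]b_T$ at window endpoints and $\|x[k]\|\le m\|x[jN]\|+\Psi[\epsilon]b_T$ inside each window. Together these give exponential convergence to an $\mathcal{O}(\epsilon)$ ball. Two small repairs are needed:
\begin{itemize}
\item The invariance condition should be $\frac{r'}{4m}+\Psi[\epsilon_1]b_T\le\frac{r'}{m}$. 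As written, $\frac14 r'+\Psi[\epsilon_1]b_T\le r'/m$ is impossible when $m\ge 4$.
\item You should state explicitly that you take exponential stability of the $\epsilon$-dependent system (\ref{eq:BFS1988_eq3}) with $m,\lambda$ independent of $\epsilon$ in the slow time $\epsilon k$. Otherwise windows of length $[T/\epsilon]$ need not contract.
\end{itemize}

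\textbf{The genuine gap is the infinite-horizon estimate (\ref{argentinaeaustria}).} Your recursion $d_{j+1}\le\frac14 d_j+\Psi[\epsilon]b_T$ requires $\|z_j[(j+1)N]-x_{\rm av}[(j+1)N]\|\le\frac14\|z_j[jN]-x_{\rm av}[jN]\|$. That is incremental contraction of the average map, and the converse Lyapunov function does not supply it. $W$ certifies decay of $\|x-x^*\|$ and robustness of the equilibrium. The bound $\|\partial W/\partial x\|\le c_4\|x\|$ controls $|W(a)-W(b)|$, not $\|a-b\|$.

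In fact, (A1)--(A4) plus exponential stability of $x^*$ cannot yield this step. In $\mathbb{R}^2$, take $f_{\rm av}(x)=-x+c\,\frac{x_2}{\|x\|}(-x_2,\,x_1)^{\top}$ with $c>1$, and $f(k,x)=f_{\rm av}(x)+(-1)^k\|x\|(0,1)^{\top}$.
\begin{itemize}
\item $f_{\rm av}$ is globally Lipschitz and homogeneous of degree one, with $\|x_{\rm av}[k]\|\le e^{-\epsilon k/2}\|x_{\rm av}[0]\|$ for small $\epsilon$.
\item The positive $x_1$-axis is invariant for the average system but transversally repelling.
\item Starting on it, the original trajectory leaves it with polar angle of order $\epsilon e^{c\epsilon k}$ (until that angle is of order one), while both radii shrink like $e^{-\epsilon k}$.
\item Hence $\sup_k\|x[k]-x_{\rm av}[k]\|$ is of order $\|x[0]\|\epsilon^{1/c}\gg\epsilon$, even though $x[k]\to 0$ exactly as your bypass predicts.
\end{itemize}

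An additional hypothesis is therefore needed, for example that $f_{\rm av}$ is continuously differentiable near $x^*$. Then:
\begin{itemize}
\item Exponential stability forces $A=\partial f_{\rm av}/\partial x\,(x^*)$ to be Hurwitz.
\item A matrix $P>0$ with $A^{\top}P+PA<0$ yields $\|(a-b)+\epsilon(f_{\rm av}(a)-f_{\rm av}(b))\|_P^2\le(1-\bar c\,\epsilon)\|a-b\|_P^2$ on a small ball around $x^*$.
\item Your window recursion then closes once all trajectories involved stay in that ball.
\end{itemize}
If $f_{\rm av}$ is linear, this is immediate, since differences obey the same recursion.
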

\end{small}
}

\subsection{Averaging Under Event-Triggered Switching}
\label{app:B}

This appendix provides the rigorous justification for the averaging step invoked in Sec.~III-E and used throughout Sec.~IV. It shows that although the vector field $f(k,x,h)$ in (\ref{eq:x_k+1_event}) is not globally $T$-periodic in $k$ once the event-triggering mechanism is active, the discrete-time averaging results of \cite{BFS:1988} (see Appendix~A) remain applicable on each inter-event interval, with the resulting $\mathcal{O}(h)$ estimate propagating uniformly over the infinite horizon.

\subsubsection{Preliminaries}
Recall from (\ref{eq:zeta_1}) that $\zeta[k] = \hat G[k_{l(k)}]$ is piecewise constant, equal to $\zeta_l := \hat G[k_l]$ for all $k \in [k_l, k_{l+1})$. Let $x[k] = (\hat G[k]^\top, \tilde\theta[k]^\top)^\top$ and let $x_{\rm{av}}[k]$ denote the solution of the average system (\ref{eq:hatG_av_k+1_1})--(\ref{eq:tildeTheta_av_k+1_1}), generated using its own event-triggering sequence $\{k_l^{\rm{av}}\}$ as in Definition~\ref{def:staticEvent_discrete_av}.

\begin{lemma}[Periodicity on the Frozen Extension]
\label{lem:A}
Fix $l \in \mathbb{N}$ and let $\zeta_l = \hat G[k_l]$. Define the auxiliary infinite-horizon system
\begin{equation}
x^{(l)}[k+1] = x^{(l)}[k] + h\, f\big(k, x^{(l)}[k], h; \zeta_l\big), \quad k \geq k_l,
\label{eq:aux_system}
\end{equation}
with $x^{(l)}[k_l] = x[k_l]$, where $f(\cdot,\cdot,\cdot;\zeta_l)$ denotes the field (\ref{eq:f1_20260321_1})--(\ref{eq:f2_20260321_1}) with $\zeta[k]$ held fixed at the constant value $\zeta_l$ for \emph{all} $k \geq k_l$. Then:
\begin{enumerate}
\item[(i)] $f(k,x,h;\zeta_l)$ is $T$-periodic in $k$, for every fixed $x \in \mathbb{R}^{2n}$, for all $k \geq k_l$.
\item[(ii)] Assumptions (A1)--(A4) of Appendix~A hold for \eqref{eq:aux_system} on any ball $B_r$, with Lipschitz constants $l_1, l_2, l_{\rm{av}}$ depending on $\zeta_l$ only through $\|\zeta_l\|$, $\|K\|$, and $\|H^*\|$.
\item[(iii)] Consequently, Theorem~2 (Appendix~A) applies directly to \eqref{eq:aux_system}, yielding
\begin{equation}
\big\|x^{(l)}[k] - x_{\rm{av}}^{(l)}[k]\big\| \leq \Psi(h)\, b_T = \mathcal{O}(h), \qquad \forall k \geq k_l,
\label{eq:lemA_bound}
\end{equation}
where $x_{\rm{av}}^{(l)}$ solves \eqref{eq:aux_system}'s average, and the bound holds from $k = k_l$ onward.
\end{enumerate}
\end{lemma}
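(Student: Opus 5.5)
The plan is to verify (i)--(iii) in order. For (i), note that with $\zeta[k]\equiv\zeta_l$ held fixed for all $k\geq k_l$, the error is $e[k]=\zeta_l-x_1[k]$. It is therefore an affine function of the state, with no further explicit time dependence. Substituting this into (\ref{eq:f1_20260321_1})--(\ref{eq:f2_20260321_1}) shows that every explicit dependence on $k$ enters only through $M[k]$, $\tilde M[k]$, $\Delta H^{\ast}[k]$, $\Delta\tilde H^{\ast}[k]$, $\delta[k]$ and $\tilde\delta[k]$. Each of these is a finite combination of $\sin$ and $\cos$ of $(\omega_i\pm\omega_j\pm\omega_l)hk$. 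With the rational frequency ratios (\ref{eq:omegai_event}) and the period $T$ in (\ref{eq:T}), taken as an integer number of steps, each of these terms is $T$-periodic in $k$, so the frozen field is as well. I will state explicitly that the integer-period convention is the one implied by (\ref{eq:T}).

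For (ii), I would work on a fixed ball $B_r$ and use the fact that the frozen field is a polynomial of degree at most two in $x$, with coefficients uniformly bounded in $k$ by constants depending on $a_i$, $\|H^{\ast}\|$, $\|K\|$ and $\|\zeta_l\|$. This gives (A1) immediately: the difference of two quadratic forms is bounded by $2r$ times a coefficient bound times $\|x_1-x_2\|$. The dependence on $\|\zeta_l\|$ enters only through the $e$-terms, which are quadratic or bilinear in $\zeta_l-x_1$.

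The hard part of (ii) is (A2). The $\frac{1}{h}\tilde M[k]$ and $\frac{1}{h}\tilde\delta[k]$ terms in $f_1$ are not Lipschitz in $h$ near $h=0$ in an obvious way. The route I would try first uses the fact that $\tilde M[k]=M[k+1]-M[k]$ contains factors like $\sin(\omega_i h/2)$, so $\frac{1}{h}\tilde M[k]$ equals $\omega_i$ times a cosine plus $\mathcal{O}(h)$, and this is smooth in $h$. I would verify that these terms are smooth in $h$ on $(0,\epsilon_0]$ with bounded derivative, which gives a Lipschitz constant $l_2$. The "linearly in $x$" requirement will fail for the $x$-independent forcing $\frac{1}{h}\tilde\delta[k]$ and for the $\zeta_l$ terms. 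I would therefore either absorb these into a constant offset, replacing $l_2\|x\|$ by $l_2(\|x\|+c(\zeta_l))$, or note that the proof of the Appendix~A theorems needs only a uniform bound on $B_r$. (A3) follows by computing $f_{\rm av}$ explicitly: all zero-mean terms vanish under Assumption~\ref{assumption_w}, leaving the linear field behind (\ref{eq:hatG_av_k+1_1})--(\ref{eq:tildeTheta_av_k+1_1}) with $e_{\rm av}=\zeta_l-x_1$. This field is affine, hence globally Lipschitz with $l_{\rm av}\le\|H^{\ast}K\|+\|K\|+\|KH^{\ast}\|$. (A4) is then a definition.

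For (iii), I would invoke Theorem~\ref{corpuschristi3}. This requires exponential stability of an equilibrium of the frozen average system. That is the main obstacle, because with $\zeta_l$ held forever the averaged $\hat G$-dynamics become $\hat G_{\rm av}[k+1]=\hat G_{\rm av}[k]-hH^{\ast}K\zeta_l$, which drifts linearly and has no equilibrium, consistent with (\ref{eq:Gav_closed}). I would handle this by shifting time to $k-k_l$ and using Theorem~\ref{corpuschristi2} on a finite horizon $[k_l,k_l+\lfloor T_0/h\rfloor]$, chosen so the averaged trajectory stays in $B_r$. This gives $\Psi(h)b_{T_0}=\mathcal{O}(h)$. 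I would then argue that the statement "for all $k\ge k_l$" in (\ref{eq:lemA_bound}) is meaningful only up to the horizon on which the frozen extension is actually used, namely until the next event. Establishing that this event occurs within the $\mathcal{O}(1/h)$ window, which follows from the triggering bound (\ref{eq:kstar}) since $e_{\rm av}$ grows linearly, is what lets the constant $b_{T_0}$ be chosen uniformly in $l$ through the dependence on $\|\zeta_l\|\le$ a bound on $B_r$.
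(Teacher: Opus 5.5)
\textbf{Gap in (iii): the stated infinite-horizon bound is not proved.} Your part (i) and your handling of (A1), (A3), (A4) agree with the paper's argument: the $k$-dependence enters only through the dither sequences, the frozen field is quadratic with uniformly bounded coefficients, and (A4) is definitional. You depart from the paper at (iii), and there the proposal does not prove the statement.

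Your diagnosis is correct. For $\zeta_l\neq0$ the frozen average gives $\hat G^{(l)}_{\rm av}[k]=(I_n-(k-k_l)hH^{\ast}K)\zeta_l$, which has no equilibrium and leaves every $B_r$. So Theorem~\ref{corpuschristi3}, which needs an exponentially stable equilibrium, does not apply. Theorem~\ref{corpuschristi2} is finite-horizon and holds only while $x_{\rm av}\in B_r$. Neither yields \eqref{eq:lemA_bound} for all $k\ge k_l$, and the paper's one-line appeal to Theorem~2 with hypotheses ``satisfied globally in $k$'' does not supply it either.

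But an estimate on $[k_l,k_l+\lfloor T_0/h\rfloor]$ is a different, weaker lemma, even if it is all that Lemma~\ref{lem:B} and \eqref{eq:combined_AB} use. Your bridge to it also runs the wrong way. \eqref{eq:kstar} pairs a \emph{lower} bound on $\|\hat G[k]\|$ with an \emph{upper} bound on $\|e[k]\|$, so it only certifies that no event fires before $k^{\ast}$, i.e.\ $k_{l+1}-k_l\ge k^{\ast}$. Placing $k_{l+1}$ inside your window needs the reverse inequalities for the true triggering signals, and nothing in the paper provides them.

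\textbf{Gap in (ii)/(iii): even the finite-horizon $\mathcal{O}(h)$ bound fails, because (A2) fails.} Your fix targets the $1/h$ prefactors, which are harmless. The real obstruction is that every dither coefficient depends on $h$ through the phase $\omega_i hk$, and $\partial_h\sin(\omega_i hk)=\omega_i k\cos(\omega_i hk)$ grows with $k$. So no $k$-uniform Lipschitz constant in $h$ exists, with or without your offset $c(\zeta_l)$.

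This is not a technicality. Here $h$ is both the step size and the dither phase increment. A dither period therefore spans about $T/h$ steps, which is an integer only for special $h$ that \eqref{eq:T} does not guarantee. Partial sums of zero-mean terms over a fraction of a period are then $h\cdot\mathcal{O}(1/h)=\mathcal{O}(1)$.

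Concretely, on the frozen extension the input is held at $-K\zeta_l$. The components of $x^{(l)}[k]$ are therefore $\tilde\theta[k]=\tilde\theta[k_l]-(k-k_l)hK\zeta_l$ and $\hat G[k]$ given by \eqref{eq:hatG_20240302_2}. The averaged first component obeys $x^{(l)}_{1,\rm av}[k+1]=x^{(l)}_{1,\rm av}[k]-hH^{\ast}K\zeta_l$, started at $x^{(l)}_{1,\rm av}[k_l]=\zeta_l$. Subtracting gives
\[
x^{(l)}_1[k]-x^{(l)}_{1,\rm av}[k]=\big(\hat G[k]-H^{\ast}\tilde\theta[k]\big)-\big(\hat G[k_l]-H^{\ast}\tilde\theta[k_l]\big).
\]
This depends on $k$ only through $hk_l$ and $h(k-k_l)$. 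Its $\tilde\theta$-independent part is $\delta[k]-\delta[k_l]$, generically of order $|Q^{\ast}|/\min_i a_i$ after a fraction of a dither period. So it does not shrink as $h\to0$ on horizons of $\mathcal{O}(1/h)$ steps. A uniform bound on $B_r$ therefore cannot substitute for (A2).

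A valid averaging step needs a genuine time-scale-separation parameter, such as an adaptation gain small relative to $\omega$, with errors measured in that parameter. The paper's proof of (ii) asserts $l_2$ ``analogously'' and has the same defect. Neither your argument nor the paper's establishes (iii).
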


\begin{proof}
(i) With $\zeta[k] \equiv \zeta_l$ constant, every $k$-dependence in $f_1$, given by (\ref{eq:f1_20260321_1}), enters exclusively through the periodic sequences $M[k]$, $\tilde M[k]$, $\Delta H^{\ast}[k]$, $\Delta \tilde{H}^{\ast}[k]$, $\delta[k]$, $\tilde{\delta}[k]$, defined in (\ref{eq:M_v1}), (\ref{eq:DeltaCalligraH}), (\ref{eq:Delta}) and (\ref{eq:hatG_20260313_4}), all of which are exactly $T$-periodic by construction and independent of $x$ and of the numerical value of $\zeta_l$. Hence $f(\cdot,x,\cdot;\zeta_l)$, viewed as a function of $k$ alone for fixed $x$, is $T$-periodic for every $k \geq k_l$, with no truncation.

(ii) Since $f$ is quadratic in $(x,\zeta_l)$ with coefficients bounded by $\|K\|,\|H^*\|,a_i$ (uniformly periodic and bounded), standard bounds on quadratic maps give, $\forall x_1,x_2 \in B_r$,
\begin{align}
\|f(k,x_1,h;\zeta_l) - f(k,x_2,h;\zeta_l)\| \leq l_1(\zeta_l) \|x_1-x_2\|\,,
\end{align}
with $l_1(\zeta_l)$ an explicit polynomial in $\|\zeta_l\|, \|K\|, \|H^*\|, r$, and analogously for $l_2, l_{\rm{av}}$. This establishes (A1)--(A3); (A4) is definitional.

(iii) Follows by direct application of Theorem~2 to \eqref{eq:aux_system}, whose hypotheses are satisfied globally in $k$ by (i)--(ii). The estimate \eqref{eq:lemA_bound} originates from the standard near-identity (stroboscopic) transformation underlying the proof of Theorem~2, which yields an estimate uniform in $k$ rather than one that improves only asymptotically in $k-k_l$.
\end{proof}

\begin{lemma}[Exact Restriction to the True Window]
\label{lem:B}
For all $k \in [k_l, k_{l+1})$, the true trajectory coincides with the auxiliary trajectory of Lemma~\ref{lem:A}:
\begin{equation}
x[k] = x^{(l)}[k], \qquad k \in [k_l, k_{l+1}).
\label{eq:lemB_identity}
\end{equation}
\end{lemma}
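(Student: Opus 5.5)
The plan is to argue by induction on $k$ within the window $[k_l,k_{l+1})$, using only that the two recursions share their initial condition and coincide as maps there. The true system (\ref{eq:x_k+1_event}) and the auxiliary system (\ref{eq:aux_system}) are both one-step recursions of the form $x[k+1]=x[k]+h\,F(k,x[k])$. The true field depends on $k$ through the periodic dither sequences and through the held signal $\zeta[k]$ defined in (\ref{eq:zeta_1})--(\ref{eq:zeta_2}). The auxiliary field uses the same periodic sequences but freezes $\zeta$ at $\zeta_l$ for all $k\geq k_l$.

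The first step is to record precisely where $\zeta$ enters $f$. In (\ref{eq:f1_20260321_1})--(\ref{eq:f2_20260321_1}) the state is $x=(\hat G^\top,\tilde\theta^\top)^\top$, and the extra signal is $e[k]$. By (\ref{eq:e_discrete_event_2}), $e[k]=\zeta[k]-x_1[k]$, so $f(k,x,h)$ is obtained from the map $(k,x,\zeta)\mapsto f(k,x,h;\zeta)$ by substituting $\zeta=\zeta[k]$.

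The second step uses (\ref{eq:zeta_2}) and the definition of $l(k)$: for every $k\in[k_l,k_{l+1})$ the most recent triggering index is $l(k)=l$, hence $\zeta[k]=\zeta_l$. This is the one place where the triggering sequence matters. The rule in Definition~\ref{def:staticEvent_discrete} defines $k_{l+1}$ as the \emph{first} index after $k_l$ at which the condition fails. No update therefore occurs strictly inside the window, so $\zeta$ cannot change before $k_{l+1}$. It follows that $f(k,x,h)=f(k,x,h;\zeta_l)$ for all $x$ and all $k\in[k_l,k_{l+1})$.

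The third step is the induction itself. The base case $x[k_l]=x^{(l)}[k_l]$ holds by the initialization in Lemma~\ref{lem:A}. If $x[k]=x^{(l)}[k]$ for some $k$ with $k+1<k_{l+1}$, then
\[
x[k+1]=x[k]+hf(k,x[k],h;\zeta_l)=x^{(l)}[k]+hf(k,x^{(l)}[k],h;\zeta_l)=x^{(l)}[k+1].
\]
This proves (\ref{eq:lemB_identity}) on the whole window, including its last index $k_{l+1}-1$. The identity may fail at $k_{l+1}$ itself, because there the true system resets $\zeta$ to $\hat G[k_{l+1}]$, but the statement does not claim it there.

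If the window is infinite, i.e., no further trigger occurs and $\mathcal{K}$ is finite, the same induction runs over all $k\geq k_l$. I see no real obstacle. The only step needing care is the second: the held value must be exactly $\hat G[k_l]$ computed from the \emph{true} trajectory (not the average one), and the interval convention must be half-open.
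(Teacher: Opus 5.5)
Your argument is correct and is essentially the paper's proof. On $[k_l,k_{l+1})$ one has $\zeta[k]=\zeta_l$, so the true recursion coincides term by term with the frozen one, and the common initial condition at $k_l$ propagates forward; your explicit induction is just the unwound form of the paper's appeal to uniqueness of solutions. One minor correction to your aside: $x[k_{l+1}]$ is produced by the step taken at $k_{l+1}-1$, which still uses $\zeta_l$, so the identity also holds at $k_{l+1}$, and the reset to $\zeta_{l+1}=\hat G[k_{l+1}]$ only affects $x[k_{l+1}+1]$ onward.
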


\begin{proof}
By (\ref{eq:zeta_1}) and (\ref{eq:zeta_2}), $\zeta[k] = \hat G[k_l] = \zeta_l$ identically for all $k \in [k_l,k_{l+1})$ in the true closed-loop system. Hence, on this range, the true recursion $x[k+1] = x[k] + h f(k,x[k],h;\zeta[k])$ coincides term-by-term with \eqref{eq:aux_system}. Since both systems share the initial condition $x[k_l]$, uniqueness of solutions to \eqref{eq:aux_system} gives \eqref{eq:lemB_identity}.
\end{proof}

Combining Lemmas~\ref{lem:A}--\ref{lem:B}:
\begin{equation}
\big\|x[k] - x_{\rm{av}}^{(l)}[k]\big\| \leq \mathcal{O}(h), \qquad k \in [k_l,k_{l+1}),
\label{eq:combined_AB}
\end{equation}
with the $\mathcal{O}(h)$ constant uniform in $l$ by Lemma~\ref{lem:A}(ii) and the a priori compactness of the trajectory (Sec.~IV-A).

\begin{lemma}[Reset Propagation]
\label{lem:C}
Let $x_{\rm{av}}[k]$ solve (\ref{eq:hatG_av_k+1_1}) and (\ref{eq:tildeTheta_av_k+1_1}) globally, switching to $\hat G_{\rm{av}}[k_l]$ at each $k_l$ per Definition~3. Then there exists $C>0$, independent of $l$, such that
\begin{equation}
\big\|x_{\rm{av}}^{(l)}[k] - x_{\rm{av}}[k]\big\| \leq C\, \big\|\hat G_{\rm{av}}[k_l] - \hat G[k_l]\big\|, \qquad k \in [k_l,k_{l+1}),
\label{eq:lemC_bound}
\end{equation}
and consequently
\begin{equation}
\big\|\hat G_{\rm{av}}[k_l] - \hat G[k_l]\big\| = \mathcal{O}(h), \qquad \forall l \in \mathbb{N}.
\label{eq:lemC_induction}
\end{equation}
\end{lemma}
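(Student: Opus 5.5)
The plan is to compare, on a single inter-event window $[k_l,k_{l+1})$, two trajectories: the frozen averaged trajectory $x_{\rm{av}}^{(l)}$ of Lemma~\ref{lem:A}, and the global averaged trajectory $x_{\rm{av}}$. Both obey the same affine recursion, namely (\ref{eq:hatG_av_k+1_3}) for the $\hat G$-component and the analogous held-input form of (\ref{eq:tildeTheta_av_k+1_1}) for the $\tilde\theta$-component. They differ only in two pieces of data at $k_l$: the held value ($\hat G[k_l]$ versus $\hat G_{\rm{av}}[k_l]$) and the initial state. Because the recursion is linear in this data, the difference $d[k]:=x_{\rm{av}}^{(l)}[k]-x_{\rm{av}}[k]$ is itself driven by the difference of the data. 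Solving in closed form as in (\ref{eq:Gav_closed}) gives
\begin{align}
d_1[k]=\big(I_n-(k-k_l)hH^{\ast}K\big)\big(\hat G[k_l]-\hat G_{\rm{av}}[k_l]\big).
\end{align}
The $\tilde\theta$-component is obtained the same way after using $\hat G_{\rm{av}}=H^{\ast}\tilde\theta_{\rm{av}}$ from (\ref{eq:hatG_av_k_1}), which carries a factor $\|H^{\ast-1}\|$.

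The first main step is to make the constant in (\ref{eq:lemC_bound}) independent of $l$. The factor $(k-k_l)h$ could grow with the window length. To control it, I would rewrite the held recursion in its ISS form (\ref{eq:hatG_av_k+1_1}). I then use that $V$ in (\ref{eq:lyapfunc}) decreases geometrically along both trajectories, by (\ref{eq:lyapfunc_6_cc}), while the triggering inequality (\ref{eq:eAv_upperBound}) holds on the window. Together these bound the propagated difference by $\sqrt{\lambda_{\max}\{P\}/\lambda_{\min}\{P\}}$ times the initial discrepancy. The result is $C=\sqrt{\lambda_{\max}\{P\}/\lambda_{\min}\{P\}}\,\max\{1,\|H^{\ast-1}\|\}$. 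An alternative uses the window length directly: the triggering rule forces an event before $(k-k_l)h\|H^{\ast}K\|$ exceeds roughly $\sqrt\sigma/\alpha$, so $(k-k_l)h$ is uniformly bounded. Either route gives (\ref{eq:lemC_bound}).

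For (\ref{eq:lemC_induction}) I would argue by induction on $l$. At $l=0$ the two trajectories share the initial condition, so the discrepancy is zero. Now suppose $\|\hat G_{\rm{av}}[k_l]-\hat G[k_l]\|\le c\,h$. Combining (\ref{eq:combined_AB}) with (\ref{eq:lemC_bound}) on the window gives
\begin{align}
\|x[k]-x_{\rm{av}}[k]\|\le \mathcal{O}(h)+C c h \quad\text{up to } k_{l+1}-1.
\end{align}
One more step of the recursion (Lipschitz constant $1+\mathcal{O}(h)$) extends this to $k_{l+1}$, which yields the discrepancy at the next reset.

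The main obstacle is that this naive induction lets the constants accumulate, going from $c$ to $Cc+\mathcal{O}(1)$, so the bound is not uniform in $l$. I would handle this by contraction across windows. Over each window of length at least $k^{\ast}$, the averaged error dynamics contract $V$ by the factor $\rho^{k^{\ast}}$, where $\rho=1-\frac{\lambda_{\min}\{Q\}}{\lambda_{\max}\{P\}}\frac{1-\sigma}{2}$. Measuring the reset discrepancy in the $P$-weighted norm, the induction then takes the form $c_{l+1}\le\rho^{k^{\ast}/2}c_l+\bar c$. This gives the uniform bound $c_l\le\bar c/(1-\rho^{k^{\ast}/2})$, which is the claimed $\mathcal{O}(h)$ for all $l\in\mathbb{N}$.

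A secondary issue is that the true and averaged triggering sequences may differ. I would treat the average as driven by the true instants $\{k_l\}$, as the lemma's statement implicitly does, and note that (\ref{eq:eAv_upperBound}) still holds up to $\mathcal{O}(h)$ there.
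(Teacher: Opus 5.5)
Your outline follows the paper's. You use an affine recursion on $[k_l,k_{l+1})$ to get an $l$-independent $C$, then close an induction over resets with the Lyapunov rate $\rho$; the paper's proof writes exactly the term $C\rho^{(k_{l+1}-k_l)/2}\|\hat G_{\rm{av}}[k_l]-\hat G[k_l]\|$ and labels it a contraction from (\ref{eq:lyapfunc_10}). Your closed form $d_1[k]=(I_n-(k-k_l)hH^{\ast}K)\,d_1[k_l]$ is more accurate than the paper's claim that the two trajectories differ ``only in initial condition'' (the held values differ too). You are also right that the naive induction lets constants accumulate.

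The genuine gap is the step meant to stop that accumulation. The decrease (\ref{eq:lyapfunc_6_cc}) holds only along trajectories that satisfy the triggering inequality (\ref{eq:eAv_upperBound}). The discrepancy $d$ evolves over a window whose length is set by $\hat G$, not by $d$. The inequality does not transfer to $d$, because $\|H^{\ast}Kd\|/\|d\|$ depends on the direction of $d$. This fails concretely on the paper's own simulation data. There $K=0.05I_2$, so $H^{\ast}K$ is symmetric with eigenvalues $5.5$ and $0.5$, and $\sigma=0.8$, $\alpha=1.7$. If $\hat G[k_l]$ lies along the slow eigenvector, no event fires until $(k-k_l)h\approx 0.69$ (exactly for the averaged rule, up to $\mathcal{O}(h)$ for the true one). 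Over that window, a discrepancy along the fast eigenvector is multiplied by $|1-5.5\cdot 0.69|\approx 2.8$, for all small $h$. So the reset discrepancy can expand rather than contract. What is missing is an incremental (contraction) property of the triggered average system, and Lyapunov decay of each trajectory separately does not supply it. The same objection rules out your first route to a uniform $C$. Your window-length route does work, but the event must be forced through $\|H^{\ast}Kg\|\geq\|(H^{\ast}K)^{-1}\|^{-1}\|g\|$ together with decay of $\|\hat G_{\rm{av}}\|$, not through $\|H^{\ast}K\|$. In the example, $(k-k_l)h\|H^{\ast}K\|\approx 3.8$ before the event fires.

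Even granting a per-step contraction at rate $\rho$, your bookkeeping does not yield $\mathcal{O}(h)$ when the windows are only known to satisfy $k^{\ast}\geq 2$. From (\ref{eq:LyapEq_1}), for any unit vector $w$, $w^{\top}Qw=2h\,w^{\top}PKH^{\ast}w-h^{2}(KH^{\ast}w)^{\top}P(KH^{\ast}w)\leq 2h\|KH^{\ast}\|\lambda_{\max}\{P\}$. Hence $1-\rho\leq(1-\sigma)h\|KH^{\ast}\|$, and $1-\rho^{k^{\ast}/2}=\mathcal{O}(h)$ for bounded $k^{\ast}$. Your fixed point $\bar{c}/(1-\rho^{k^{\ast}/2})$ is then inflated by a factor of order $1/h$, which leaves only an $\mathcal{O}(1)$ bound on the reset discrepancy. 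A uniform factor below one requires inter-event times of order $1/h$. The averaged rule does have these while $\|\hat G_{\rm{av}}[k_l]\|$ is not small, since no event can fire while $(k-k_l)h(\alpha+\sqrt{\sigma})\|H^{\ast}K\|\leq\sqrt{\sigma}$. However, (\ref{eq:kstar_final}) does not give this, and it breaks down once $\|\hat G[k_l]\|=\mathcal{O}(h)$.

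Matching the paper's outline does not rescue the argument. The paper's inductive step invokes the same unproved contraction and omits the geometric-sum bookkeeping altogether. So (\ref{eq:lemC_induction}) needs a genuinely incremental argument in either version.
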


\begin{proof}
Both $x_{\rm{av}}^{(l)}$ and $x_{\rm{av}}$ satisfy the linear recursion (\ref{eq:hatG_av_k+1_1}) on $[k_l,k_{l+1})$, differing only in initial condition: $x_{\rm{av}}^{(l)}$ freezes at $\hat G[k_l]$ (the true value), while $x_{\rm{av}}$ carries forward its own $\hat G_{\rm{av}}[k_l]$. Since $\|I_n - hH^*K\| \leq 1$ for $h$ sufficiently small (Assumption~2), the linear map is non-expansive, giving \eqref{eq:lemC_bound} with $C$ bounded uniformly in $l$.

For \eqref{eq:lemC_induction}, proceed by induction on $l$. Base case $l=0$: $\hat G_{\rm{av}}[0] = \hat G[0]$ exactly (identical initialization), so the left side is zero. Inductive step: assume $\|\hat G_{\rm{av}}[k_l]-\hat G[k_l]\| = \mathcal{O}(h)$. By \eqref{eq:combined_AB} and \eqref{eq:lemC_bound},
\begin{align}
\|\hat G[k_{l+1}] - \hat G_{\rm{av}}[k_{l+1}]\| &\leq \underbrace{\mathcal{O}(h)}_{\text{Lemma~\ref{lem:B}}} \nonumber \\
&\quad + \underbrace{C\rho^{(k_{l+1}-k_l)/2}\|\hat G_{\rm{av}}[k_l]-\hat G[k_l]\|}_{\text{contraction, Eq.~(\ref{eq:lyapfunc_10})}}\,,
\end{align}
where $\rho \in (0,1)$ is the contraction rate from (\ref{eq:lyapfunc_10}). Since $\rho^{(k_{l+1}-k_l)/2} < 1$ and the inductive hypothesis gives an $\mathcal{O}(h)$ term, the right-hand side remains $\mathcal{O}(h)$. Hence \eqref{eq:lemC_induction} holds for all $l$ by induction.
\end{proof}

\subsubsection{Main Result}
\begin{theorem}[Averaging Validity Under Event-Triggering]
\label{thm:mainB}
Under Assumptions~1--2 and the a priori compactness of the closed-loop trajectory established in Sec.~IV-A, the estimate
\begin{equation}
\|x[k] - x_{\rm{av}}[k]\| \leq \mathcal{O}(h), \qquad \forall k \geq 0,
\label{eq:main_result}
\end{equation}
holds, where $x_{\rm{av}}[k]$ is generated by (\ref{eq:hatG_av_k+1_1}) and (\ref{eq:tildeTheta_av_k+1_1}) using its own triggering sequence $\{k_l^{\rm{av}}\}$. Consequently, the estimates (\ref{eq:Gcloseness})--(\ref{eq:ecloseness}) used in the proof of Theorem~1 are valid as stated.
\end{theorem}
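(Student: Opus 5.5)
The plan is to assemble the global estimate (\ref{eq:main_result}) by patching together the three lemmas of this appendix, interval by interval, and then to remove the mismatch between the two triggering sequences $\{k_l\}$ and $\{k_l^{\rm av}\}$.

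\textbf{Step 1: closeness on each true window.} Fix $l$ and take $k\in[k_l,k_{l+1})$. Split the error with the triangle inequality as
\begin{align*}
\|x[k]-x_{\rm av}[k]\|\le \|x[k]-x^{(l)}_{\rm av}[k]\|+\|x^{(l)}_{\rm av}[k]-x_{\rm av}[k]\|.
\end{align*}
The first term is $\mathcal{O}(h)$ by (\ref{eq:combined_AB}), that is, by Lemmas~\ref{lem:A} and \ref{lem:B}. The second term is bounded by $C\|\hat G_{\rm av}[k_l]-\hat G[k_l]\|$ through (\ref{eq:lemC_bound}), and this quantity is $\mathcal{O}(h)$ by (\ref{eq:lemC_induction}).

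\textbf{Step 2: uniformity of the constants.} The constants must not depend on $l$. The Lipschitz data in Lemma~\ref{lem:A}(ii) depend only on $\|\zeta_l\|$ and on the ball radius $r$. Both are bounded uniformly, for two reasons:
\begin{itemize}
\item the average trajectory decays exponentially by (\ref{eq:lyapfunc_8})--(\ref{eq:lyapfunc_9});
\item an a priori bootstrap keeps the true trajectory in a fixed ball $B_r$. If $x[k]$ has stayed in $B_r$ up to $k_l$, then $\zeta_l$ is bounded, the window estimate holds, and $x$ remains within $\mathcal{O}(h)$ of a decaying $x_{\rm av}$. Hence $x$ stays in $B_r$ for $h$ small enough, and induction on $l$ closes the argument.
\end{itemize}
With $l$-uniform constants in hand, Theorem~\ref{corpuschristi3} supplies the infinite-horizon version of the window bound.

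\textbf{Step 3: reconciling $\{k_l\}$ and $\{k_l^{\rm av}\}$.} This is the step I expect to be the main obstacle. The two mechanisms may fire at different indices, because their triggering tests are evaluated on states that differ by $\mathcal{O}(h)$. I would compare the inequalities $\sqrt\sigma\|\hat G\|-\alpha\|e\|<0$ and its average counterpart. These differ by $\mathcal{O}(h)$, so a mismatch can occur only when the average margin lies within $\mathcal{O}(h)$ of zero.

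On the index set where the two hold values differ, the fields differ by $K(\zeta_l-\zeta^{\rm av}_{l'})$. Each such difference is bounded by $\|\hat G[k_l]-\hat G[k_{l'}^{\rm av}]\|$. Since one step of (\ref{eq:hatG_av_k+1_3}) changes $\hat G$ by only $\mathcal{O}(h)\|\hat G\|$, a lag of a bounded number of steps costs only $\mathcal{O}(h)$.

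The remaining worry is that the lag could be unbounded, or could accumulate over many windows. To exclude this, I would rely on the contraction factor $\rho^{(k_{l+1}-k_l)/2}$ with $k_{l+1}-k_l\ge 2$, taken from the inter-event bound in Part~B. This factor geometrically discounts past discrepancies, so their sum is a convergent geometric series of $\mathcal{O}(h)$ terms, giving $\mathcal{O}(h)/(1-\rho)$.

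\textbf{Step 4: conclusion.} Combining Steps 1--3 yields (\ref{eq:main_result}) for all $k\ge0$. Projecting onto the $\hat G$ component gives (\ref{eq:Gcloseness}). Since $e=\zeta-\hat G$ and $\zeta-\zeta_{\rm av}$ is $\mathcal{O}(h)$ by Step 3, (\ref{eq:ecloseness}) follows as well.
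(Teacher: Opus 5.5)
Your Step~1 is exactly the paper's whole proof: Lemma~\ref{lem:B}, Lemma~\ref{lem:A}(iii) and Lemma~\ref{lem:C} combined by the triangle inequality, with uniformity in $l$ credited to the induction in Lemma~\ref{lem:C}. You are right that this only compares $x$ with an average trajectory that resets at the \emph{true} instants $k_l$, whereas the statement uses $\{k_l^{\rm av}\}$. The paper tacitly identifies the two sequences, so your Step~3 targets a real omission. The argument you give there, however, does not close. The discount you invoke, $\rho^{(k_{l+1}-k_l)/2}$ with only $k_{l+1}-k_l\geq 2$, is at best $\rho$ per window, and $1-\rho$ is itself $\mathcal{O}(h)$. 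Indeed, by (\ref{eq:LyapEq_1}), for every unit vector $v$ one has $v^{\top}Qv=2h\,v^{\top}PKH^{\ast}v-h^{2}(KH^{\ast}v)^{\top}P(KH^{\ast}v)\leq 2h\lambda_{\max}\{P\}\|KH^{\ast}\|$, hence $1-\rho=\frac{\lambda_{\min}\{Q\}}{\lambda_{\max}\{P\}}\frac{1-\sigma}{2}\leq h(1-\sigma)\|KH^{\ast}\|$. Your geometric series therefore sums to $\mathcal{O}(h)/(1-\rho)=\mathcal{O}(1)$, not $\mathcal{O}(h)$. This is intrinsic to a step-$h$ recursion, since no per-step Lyapunov decay can beat $1-\mathcal{O}(h)$. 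The same defect is hidden in the ``remains $\mathcal{O}(h)$'' induction of Lemma~\ref{lem:C}, so uniformity cannot be borrowed from there either. The missing ingredient is a dwell time that is uniform in continuous time, $(k_{l+1}-k_l)h\geq\tau>0$ with $\tau$ independent of $h$, so that each window discounts by a fixed factor $e^{-c\tau}<1$. For the average mechanism this does follow from (\ref{eq:Gav_closed})--(\ref{eq:eav_closed}): no event can occur at $k=k_l+m$ while $mh\|H^{\ast}K\|(\alpha+\sqrt{\sigma})\leq\sqrt{\sigma}$. But the bound $k^{\ast}\geq 2$ of Part~B is far too weak for this purpose. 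Moreover, the true mechanism inherits such a dwell time only while $\|\hat G_{\rm av}\|$ dominates the $\mathcal{O}(h)$ discrepancy, so a separate ultimate-boundedness argument is needed near the origin.

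Step~3 has two further weak points, and Step~2 has one. First, the lag is not bounded. By (\ref{eq:Gav_closed})--(\ref{eq:eav_closed}), the margin $\sqrt{\sigma}\|\hat G_{\rm av}\|-\alpha\|e_{\rm av}\|$ moves by at most $(\sqrt{\sigma}+\alpha)h\|H^{\ast}K\hat G_{\rm av}[k_l]\|$ per step. An $\mathcal{O}(h)$ perturbation of it can therefore move the crossing by order $1/\|\hat G_{\rm av}[k_l]\|$ steps, which blows up as the trajectory converges. Once $\|\hat G_{\rm av}\|=\mathcal{O}(h)$, the two mechanisms need not even fire equally often, so the pairing $\zeta_l\leftrightarrow\zeta^{\rm av}_{l'}$ breaks down. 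The contraction factor says nothing about the lag. What you actually need is that the lag times the per-step drift $h\|H^{\ast}K\hat G_{\rm av}[k_l]\|$ is $\mathcal{O}(h)$, and that requires a lower bound on the slope of the margin at the crossing. Second, (\ref{eq:lyapfunc_10}) is decay along a single trajectory, not an incremental contraction between two trajectories with different hold values and trigger instants; the triggered map is discontinuous in the state. So ``geometrically discounts past discrepancies'' needs its own proof. Finally, the appeal to Theorem~\ref{corpuschristi3} in Step~2 is misplaced. The frozen extension (\ref{eq:aux_system}) keeps $u\equiv-K\zeta_l$ forever, so its average is the ramp $(I_n-(k-k_l)hH^{\ast}K)\zeta_l$ of (\ref{eq:Gav_closed}) continued for all $k\geq k_l$. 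This ramp has no exponentially stable equilibrium when $\zeta_l\neq 0$ and leaves every ball. Only the finite-horizon Theorem~\ref{corpuschristi2} on the window is available, and it suffices provided $(k_{l+1}-k_l)h$ is also bounded above uniformly in $l$.
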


\begin{proof}
Fix $k \geq 0$ and let $l$ be such that $k \in [k_l,k_{l+1})$. By Lemma~\ref{lem:B}, $x[k] = x^{(l)}[k]$. By Lemma~\ref{lem:A}(iii) and Lemma~\ref{lem:C},
\begin{align}
\|x[k] - x_{\rm{av}}^{(l)}[k]\| &\leq \mathcal{O}(h)\,, \\
\|x_{\rm{av}}^{(l)}[k] - x_{\rm{av}}[k]\| &\leq C\|\hat G_{\rm{av}}[k_l]-\hat G[k_l]\| = \mathcal{O}(h),
\end{align}
so by the triangle inequality $\|x[k]-x_{\rm{av}}[k]\| \leq \mathcal{O}(h)$, uniformly over $l$ by Lemma~\ref{lem:C}'s induction, hence for all $k \geq 0$.
\end{proof}

Theorem~\ref{thm:mainB} addresses an apparent incompatibility between the short duration of the inter-event window $[k_l,k_{l+1})$ and the periodicity condition required by Theorem~2. The window need not contain a single period $T$ of the dither signals: since $k_{l+1}-k_l = \mathcal{O}(1)$ while $T/h \to \infty$ as $h \to 0$, it follows that $(k_{l+1}-k_l)h/T \to 0$.

This incompatibility is resolved in two steps. First, periodicity of the vector field, and the associated $\mathcal{O}(h)$ estimate, are established not on the true window but on the auxiliary, infinite-horizon extension \eqref{eq:aux_system} of the frozen field introduced in Lemma~\ref{lem:A}, which imposes no restriction on window length. The true trajectory inherits this estimate on $[k_l,k_{l+1})$ because it coincides \emph{exactly} with the auxiliary trajectory on that interval, as shown in Lemma~\ref{lem:B}, rather than because the window contains sufficiently many periods of its own.

Second, it must be verified that the $\mathcal{O}(h)$ error introduced at each reset does not accumulate over the resulting infinite sequence of switching intervals. This is established in Lemma~\ref{lem:C}, which employs the exponential contraction rate $\rho$ obtained from the Lyapunov analysis in Sec.~\ref{sec:ET-DT-gradient-ES_stability}.

\newpage
{
\renewcommand{\baselinestretch}{1}\selectfont
\begin{IEEEbiography}[{\includegraphics[width=1in,height=1.25in,clip,keepaspectratio]{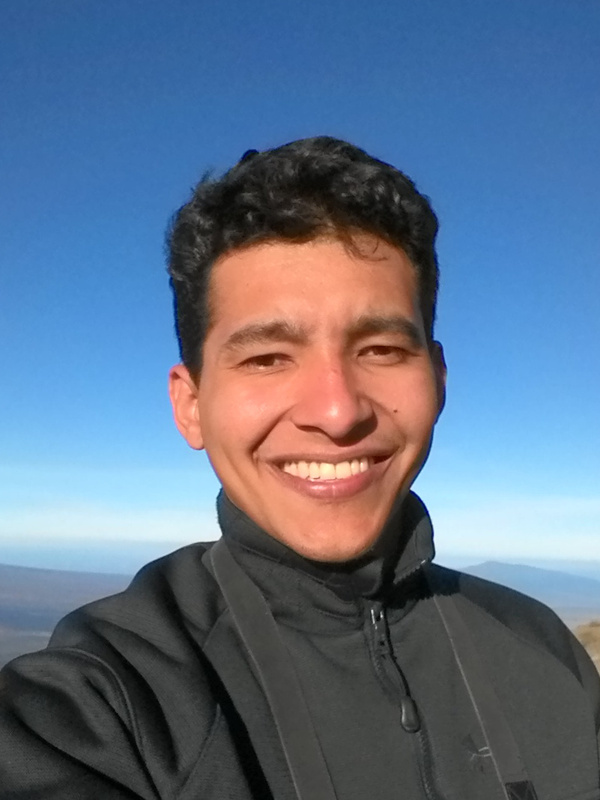}}]{Victor Hugo Pereira Rodrigues} received the B.Sc. and M.Sc. degrees in Electronics Engineering from the State University of Rio de Janeiro (UERJ) in 2016 and 2018, respectively. In 2022, he obtained the D.Sc. degree in Electrical Engineering from the Federal University of Rio de Janeiro. In 2020, he was awarded the Bolsa Nota 10 (Highest Rank Scholarship Prize), sponsored by the Brazilian agency FAPERJ. In 2024, he joined UERJ as an Associate Professor. 
His research interests include nonlinear control theory, adaptive and learning systems, hybrid systems, extremum seeking, and game theory.
\end{IEEEbiography} 
\begin{IEEEbiography}[{\includegraphics[width=1in,height=1.25in,clip,keepaspectratio]{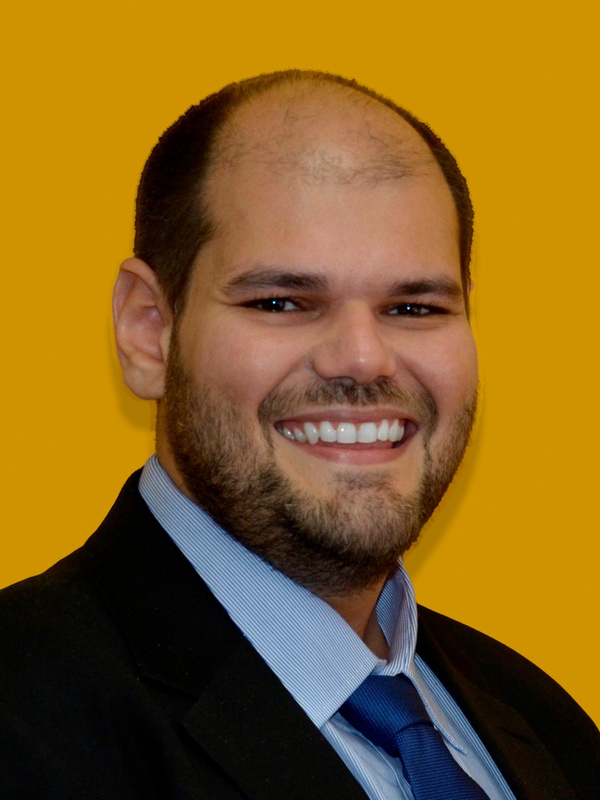}}]{Tiago Roux Oliveira} (Senior Member, IEEE) 
is currently Professor at State University of Rio de Janeiro and author of over 300 publications and the SIAM book Extremum Seeking through Delays and PDEs (2022). He serves as Associate Editor for several control journals (including IEEE Transactions on Automatic Control), chaired the IFAC TC on Adaptive and Learning Systems (2020–2026), and was elected President of the Brazilian Society of Automatics (2023–2025). His awards include 
the 2021 IEEE TCST Outstanding Paper Award.
\end{IEEEbiography}
\begin{IEEEbiography}[{\includegraphics[width=1in,height=1.25in,clip,keepaspectratio]{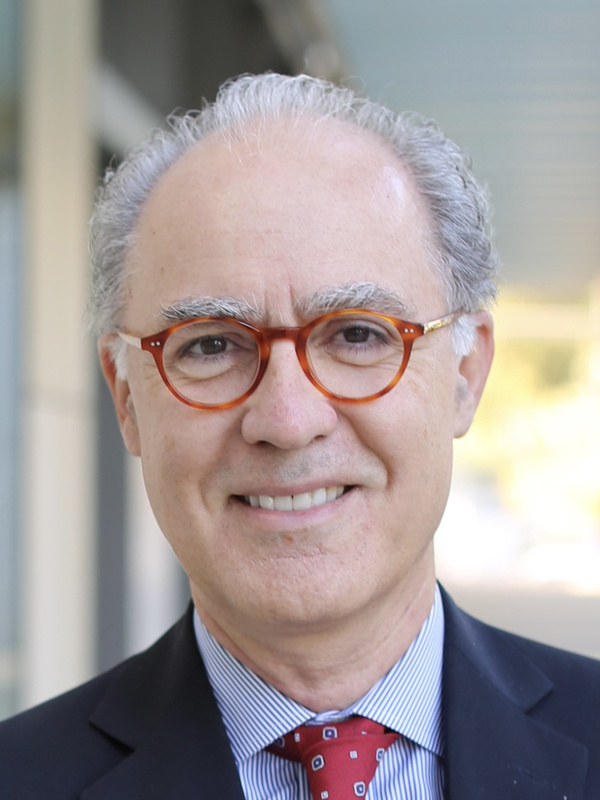}}]{Miroslav Krsti{\' c}} is Fellow of SIAM, IEEE, IFAC, ASME, AAAS, IET, AIAA (AF), and Serbian Academy of Sciences and Arts.
His awards include the SIAM Reid Prize, Bellman, Oldenburger, Ragazzini, Chestnut, Paynter, Nyquist Lecture, Bode Lecture, IFAC Nonlinear Control,
IFAC Ruth Curtain DPS, IFAC Adaptive and Learning Systems, Axelby, and Schuck (’96 and ’19. He has held chief or senior editorial positions for IEEE Transactions on Automatic Control, Systems \& Control Letters, and Automatica. Professor Krstic has co-authored 19 books on adaptive, nonlinear, and
stochastic control, extremum seeking, control of PDE systems including turbulent flows, and control of delay systems.
\end{IEEEbiography}
\begin{IEEEbiography}[{\includegraphics[width=1in,height=1.25in,clip,keepaspectratio]{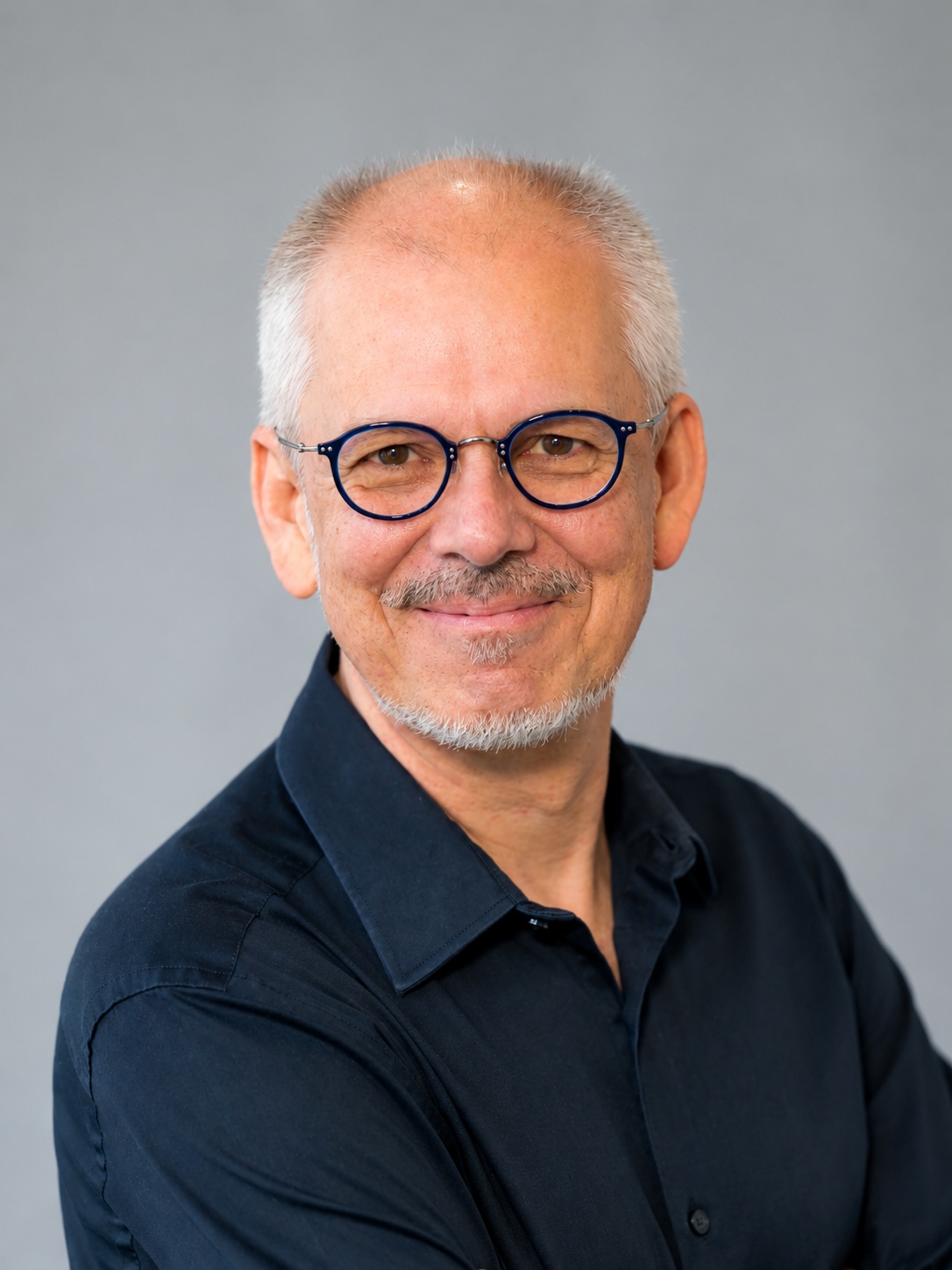}}]{Frank Allg{\" o}wer} is professor of mechanical engineering at the University of Stuttgart, Germany, and Director of the Institute for Systems Theory and Automatic Control (IST) there. Frank is active in serving the community in several roles: Among others he has been President of the IFAC for the years 2017–2020, Vice-president for Technical Activities of the IEEE Control Systems Society for 2013/14, and Editor of the journal Automatica from 2001 until 2015. From 2012 until 2020 Frank served in addition as Vice-president for the German Research Foundation, which is Germany’s most important research funding organization. His research interests include predictive control, data-based control, networked control, cooperative control, and nonlinear control with application to a wide range of fields including systems biology.
\end{IEEEbiography}
}

\end{document}